\title[Word problem and parabolic subgroups in Dyer groups]{Word problem and parabolic subgroups in Dyer groups}
\author[L Paris]{Luis Paris}
\address{Luis Paris, IMB, UMR 5584, CNRS, Universit\'e Bourgogne Franche-Comt\'e, 21000 Dijon, France}
\email{lparis@u-bourgogne.fr}
\author[M Soergel]{Mireille Soergel}
\address{Mireille Soergel, ETH Z\"urich, Mathematics Department, CH-8092 Z\"urich, Switzerland}
\email{mireille.soergel@math.ethz.ch}
\newtheorem{thm}{Theorem}[section]
\newtheorem{lem}[thm]{Lemma}
\newtheorem{prop}[thm]{Proposition}
\newtheorem{corl}[thm]{Corollary}
\theoremstyle{definition}
\newtheorem*{defin}{Definition}
\newtheorem*{rem}{Remark}
\newtheorem{expl}[thm]{Example}
\newtheorem*{acknow}{Acknowledgments}
\numberwithin{equation}{section}
\renewcommand{\thefigure}{\ifnum \c@section>\z@ \thesection.\fi
 \@arabic\c@figure}
\begin{document}

\def\N{\mathbb N} \def\Z{\mathbb Z} \def\DD{\mathcal D}
\def\id{{\rm id}} \def\Glex{{\rm glex}} \def\Supp{{\rm Supp}}
\def\Pc{{\rm Pc}} \def\PP{\mathcal P}


\begin{abstract}
One can observe that Coxeter groups and right-angled Artin groups share the same solution to the word problem.
On the other hand, in his study of reflection subgroups of Coxeter groups Dyer introduces a family of groups, which we call Dyer groups, which contains both, Coxeter groups and right-angled Artin groups.
We show that all Dyer groups have this solution to the word problem, we show that a group which admits such a solution belongs to a little more general family  of groups that we call quasi-Dyer groups, and we show that this inclusion is strict.
Then we show several results on parabolic subgroups in quasi-Dyer groups and in Dyer groups.
Notably, we prove that any intersection of parabolic subgroups in a Dyer group of finite type is a parabolic subgroup.

\smallskip\noindent
{\bf AMS Subject Classification\ \ } 
Primary: 20F36, 20F55.
Secondary: 20F10.

\smallskip\noindent
{\bf Keywords\ \ } 
Coxeter groups, right-angled Artin groups, Dyer groups, word problem, parabolic subgroups.

\end{abstract}

\maketitle


\section{Introduction}\label{sec1}

There is an extensive literature on Coxeter groups as well as on right-angled Artin groups and, more generally, on graph products of cyclic groups.
A peculiarity of these two families of groups is that they share the same solution to the word problem: that given by Tits \cite{Tits1} for Coxeter groups and that given by Green \cite{Green1} for graph products of cyclic groups.
This common algorithm goes beyond the simple solution to the word problem since it provides an effective criterion to determine if an expression is reduced or not and it makes it possible to define normal forms.

It is therefore natural to ask the following questions.
What do these two families of groups have in common that makes them to have the same solution to the word problem?
Which other groups have this solution to the word problem?

To be more precise, our questions and study concern marked groups.
Recall that a \emph{marked group} is a pair $(G,X)$ where $G$ is a group and $X$ is a generating set for $G$.
Note that Coxeter groups and graph products of cyclic groups are actually marked groups.
In the rest of the paper we will say that a marked group $(G,X)$ has \emph{Property $\DD$} if it has the same solution to the word problem as Coxeter groups and as graph products of cyclic groups.
A precise definition of Property $\DD$ is given in Section \ref{sec2}.

In his study of reflection subgroups of Coxeter groups Dyer \cite{Dyer1} introduces a family of groups, that we call \emph{Dyer groups}, which contains both, Coxeter groups and graph products of cyclic groups.
A careful reading of the proof of \cite[Lemma 2.8]{Dyer1} completed with ideas from Tits \cite{Tits1} allows an informed reader to show that these groups have Property $\DD$.
A complete and explicit proof of this result is given in Section \ref{sec3} (see Theorem \ref{thm2_2}).

This partially answers our questions in the sense that it says what Coxeter groups and graph products of cyclic groups have in common that makes them to have the same solution to the word problem.
It remains to see whether Dyer groups are the only marked groups to have Property $\DD$, and, if not, which other groups have it.

Examples of marked groups that are not Dyer groups and that have Property $\DD$ can be easily found.
For example $(\Z,\{1,2\})$ has Property $\DD$ and it is not a Dyer group (note that $(\Z,\{1\})$ is a Dyer group).
However, in this context it is reasonable to restrict the study to marked groups $(G,X)$ satisfying the following property.
\begin{itemize}
\item
For all $x,y\in X$ and $a,b\in\Z$ such that $x^a\neq1$ and $y^b\neq1$, if $x^ay^b$ is a non-trivial power of an element of $X$ or if $x^ay^b=1$, then $x=y$.
\end{itemize}
In that case we say that $(G,X)$ is a \emph{strongly marked group}.
Coxeter groups, graph products of cyclic groups and, more generally, Dyer groups are strongly marked groups.

We introduce a family of marked groups a little more general than that of Dyer groups which we call \emph{quasi-Dyer groups} and in Section \ref{sec4} we show that, if a strongly marked group $(G,X)$ has Property $\DD$, then $(G,X)$ is a quasi-Dyer group (see Theorem \ref{thm2_3}).

We do not know if all quasi-Dyer groups are strongly marked and/or if they all have Property $\DD$, but in Section \ref{sec5} we show a family of quasi-Dyer groups that are not Dyer groups, that are strongly marked, and that have Property $\DD$ (see Proposition \ref{prop2_4}).

The next question that motivates this work is: what properties common to Coxeter groups and right-angled Artin groups can be extended to Dyer groups and, more generally, to groups with Property $\DD$?
A first answer can be found in Soergel \cite{Soerg1} where actions of Dyer groups on CAT(0) spaces are constructed that extend those of Coxeter groups on Davis--Moussong complexes (see Moussong \cite{Mouso1}) and those of right-angled Artin groups on Salvetti complexes (see Charney--Davis \cite{ChaDav1}).

Parabolic subgroups play a prominent role in the study of Coxeter groups and in that of right-angled Artin groups.
Part of the results on these subgroups extends to groups with Property $\DD$.
In particular we show that a parabolic subgroup of a group having Property $\DD$ has Property $\DD$ (see Proposition \ref{prop2_7}) and that the intersection of two standard parabolic subgroups is a standard parabolic subgroup (see Corollary \ref{corl2_6}).
However, the uniqueness property for an element of minimal syllabic length in a coset of a standard parabolic subgroup holds for Dyer groups (see Proposition \ref{prop2_8}) but not for all groups having Property $\DD$ (see Example \ref{expl2_9}).

In Section \ref{sec6} we show that any intersection of parabolic subgroups in a Dyer group of finite type is a parabolic subgroup (see Theorem \ref{thm2_10}).
This property is known and widely used for Coxeter groups (see Tits \cite{Tits2}, Solomon \cite{Solom1}, Krammer \cite{Kramm1} and Qi \cite{Qi1}).
It is also known for right-angled Artin groups (see Duncan--Kazachkov--Remeslennikov \cite{DuKaRe1}) but, as far as we know, it is new for graph products of cyclic groups.

The paper is organized as follows.
In Section \ref{sec2} we give detailed and precise definitions and statements.
We also show that the intersection of two standard parabolic subgroups in a group with Property $\DD$ is a standard parabolic subgroup.
In Section \ref{sec3} we prove that Dyer groups have Property $\DD$.
In Section \ref{sec4} we show that any strongly marked group with Property $\DD$ is a quasi-Dyer group.
In Section \ref{sec5} we show a family of quasi-Dyer groups that are strongly marked, that have Property $\DD$, and that are not Dyer groups.
In Section \ref{sec6} we show that the intersection of a family of parabolic subgroups in a Dyer group of finite type is a parabolic subgroup.

\begin{acknow}
Both authors are supported by the French project ``AlMaRe'' (ANR-19-CE40-0001-01) of the ANR.
\end{acknow} 


\section{Definitions and statements}\label{sec2}

We first recall the definitions of a Coxeter group and of a graph product of cyclic groups.

\begin{defin}
Let $\Gamma$ be a simplicial graph with vertex set $V(\Gamma)$ and edge set $E(\Gamma)$.
We assume that $E(\Gamma)$ is endowed with a map $m:E(\Gamma)\to\N_{\ge 2}$.
To the pair $(\Gamma,m)$ we associate a group $W=W(\Gamma,m)$, called a \emph{Coxeter group}, defined by the following presentation:
\begin{gather*}
W=\langle x_v\,,\ v\in V(\Gamma)\mid x_v^2=1\text{ for all }v\in V(\Gamma)\,,\ (x_ux_v)^{m(e)}=1\\
\text{ for all }e=\{u,v\}\in E(\Gamma)\rangle\,.
\end{gather*}
The pair $(W,X)$ is called a \emph{Coxeter system}, where $X=\{x_v\mid v\in V(\Gamma)\}$.
\end{defin}

\begin{defin}
Let $\Gamma$ be a simplicial graph with vertex set $V(\Gamma)$ and edge set $E(\Gamma)$.
We assume that $V(\Gamma)$ is endowed with a map $f:V(\Gamma)\to\N_{\ge 2}\cup\{\infty\}$.
To the pair $(\Gamma,f)$ we associate a group $C=C(\Gamma,f)$, called a \emph{graph product of cyclic groups}, defined by the following presentation:
\begin{gather*}
C=\langle x_v\,,\ v\in V(\Gamma)\mid x_v^{f(v)}=1\text{ for all }v\in V(\Gamma)\text{ such that }f(v)\neq\infty\,,\ x_ux_v=x_vx_u\\
\text{ for all }e=\{u,v\}\in E(\Gamma)\rangle\,.
\end{gather*}
If $f(v)=\infty$ for all $v\in V(\Gamma)$, then $C$ is called a \emph{right-angled Artin group}.
\end{defin}

We turn now to recall the solution to the word problem common to these two families of groups.
Throughout the paper we use the following notations.
The order of an element $g$ in a group is denoted by $o(g)$.
If $o(g)$ is finite, then $\Z_{o(g)}=\Z/o(g)\Z$ is the cyclic group of order $o(g)$, and if $o(g)$ is infinite, then $\Z_{o(g)}=\Z$ is the infinite cyclic group.
Recall that a \emph{marked group} is a group $G$ endowed with a generating set $X\subset G$.
A marked group $(G,X)$ is of \emph{finite type} if $X$ is finite.
Let $(G,X)$ be a marked group.
\emph{The set of syllables} of $X$ is
\[
S(X)=\{x^a\mid x\in X\,,\ a\in\Z_{o(x)}\setminus\{0\}\}\,.
\]
It is clear that $S(X)$ also generates $G$.
Our solution to the word problem uses words on $S(X)$ and not on $X$, but in the concrete cases of Coxeter groups, of graph products of cyclic groups, and, more generally, of Dyer groups, this is not a problem.

Let $(G,X)$ be a marked group.
We denote by $S(X)^*$ the free monoid on $S(X)$.
The elements of $S(X)^*$ are called \emph{syllabic words} and they are written as finite sequences.
The concatenation of two words $w,w'\in S(X)^*$ is written $w\cdot w'$.
If $w =(s_1,s_2,\dots,s_\ell)\in S(X)^*$ is a syllabic word, then we set $\overline{w}=s_1s_2\cdots s_\ell\in G $ and we say that $\overline{w}$ is \emph{represented} by $w$.
The shortest length of a syllabic word representing an element $g\in G$ is called the \emph{syllabic length} of $g$ and it is denoted by $\lg_{S(X)}(g)$.
A syllabic word $w=(s_1,s_2,\dots,s_\ell)$ is \emph{reduced} if $\ell=\lg_{S(X)}(\overline{w})$.
If $a,b$ are two letters and $m$ is an integer $\ge 2$, then we denote by $[a,b]_m$ the alternating word $(a,b,a,\dots)$ of length $ m$.
Similarly, if $a,b$ are two elements of a group $G$, then we denote by $\overline{[a,b]}_m$ the alternating product $aba\cdots$ of length $m$.

\begin{defin}
Let $(G,X)$ be a marked group and let $w\in S(X)^*$ be a syllabic word.
Assume that $w$ can be written as $w=w_1\cdot(s,t)\cdot w_2$, where $w_1,w_2\in S(X)^*$, $s,t\in S (X)$, and $st\in S(X)\cup\{1\}$.
Set 
\[
w'=\left\{\begin{array}{ll}
w_1\cdot(st)\cdot w_2&\text{if }st\neq 1\,,\\
w_1\cdot w_2&\text{if }st=1\,.
\end{array}\right.
\]
Then we say that we can go from $w$ to $w'$ through an \emph{elementary M-operation of type I}.
Assume that $w$ can be written as $w=w_1\cdot [s,t]_m\cdot w_2$, where $w_1,w_2\in S(X)^*$, $s,t\in S(X)$, $m\ge2$, $\overline{[s,t]}_m=\overline{[t,s]}_m$ and $\lg_{S(X)}(\overline{[s,t]}_m)=m$.
Set
\[
w'=w_1\cdot[t,s]_m\cdot w_2\,.
\]
Then we say that we can go from $w$ to $w'$ through an \emph{elementary M-operation of type II}.
We say that $w$ is \emph{M-reduced} if its length cannot be shortened by any finite sequence of elementary M-operations.
\end{defin}

Notice that elementary M-operations of type I strictly decrease lengths of syllabic words while  elementary M-operations of type II preserve lengths.
Furthermore, elementary M-operations of type II are reversible but not those of type I.
Notice also that, if we can go from $w$ to $w'$ through a finite sequence of elementary M-operations, then $\overline{w}=\overline{w'}$.

\begin{defin}
A marked group $(G,X)$ is said to have \emph{Property $\DD$} if
\begin{itemize}
\item[(a)]
for all $w\in S(X)^*$, $w$ is reduced if and only if $w$ is M-reduced, and
\item[(b)]
for all $w,w'\in S(X)^*$, if $w$ and $w'$ are both reduced and $\overline{w}=\overline{w'}$, then we can go from $w$ to $w'$ through a finite sequence of elementary M-operations of type II.
\end{itemize}
\end{defin}

\begin{rem}
Property $\DD$ for a marked group $(G,X)$ of finite type solves the word problem in $(G,X)$, but the algorithm also solves the following two other questions.
\begin{itemize}
\item
Given a syllabic word $w$, the algorithm determines a reduced syllabic word $w'$ such that $\overline{w} = \overline{w'}$.
\item
Given a syllabic word $w$, the algorithm determines whether $w$ is reduced or not.
\end{itemize}
\end{rem}

The solutions to the word problem for Coxeter groups by Tits \cite{Tits1} and for graph products of cyclic groups by Green \cite{Green1} are summarized in the following theorem.

\begin{thm}[Tits \cite{Tits1}, Green \cite{Green1}]\label{thm2_1}
\begin{itemize}
\item[(1)]
Let $W=W(\Gamma,m)$ be a Coxeter group and let $X=\{x_v\mid v\in V(\Gamma)\}$ be its standard generating set.
Then $(W,X)$ has Property $\DD$.
\item[(2)]
Let $C=C(\Gamma,f)$ be a graph product of cyclic groups and let $X=\{x_v\mid v\in V(\Gamma)\}$ be its standard generating set.
Then $(C,X)$ has Property $\DD$.
\end{itemize}
\end{thm}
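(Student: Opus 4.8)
The plan is to treat the two families separately, since in each case Property~$\DD$ specializes to a classical solution of the word problem---Tits' for Coxeter groups and Green's for graph products---and these rest on genuinely different machinery. In both cases the easy half of~(a), that reduced words are M-reduced, is immediate and requires no structure: since every M-operation preserves the element represented, a word that could be shortened by M-operations would not be of minimal syllabic length. The substance of~(a) is the converse, that M-reduced words are reduced, and this is where the two arguments diverge.

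For part~(1), every generator $x_v$ is an involution, so $S(X)=X$ and a syllabic word is simply a word in the $x_v$. The type~I M-operations are then exactly the deletions that erase a pair $x_vx_v$---for distinct generators $x_ux_v$ has determinant $+1$ in the geometric representation and so is never a single reflection $x_w$---and the type~II M-operations are exactly the braid moves $\overline{[x_u,x_v]}_{m(e)}=\overline{[x_v,x_u]}_{m(e)}$ attached to the edges $e=\{u,v\}\in E(\Gamma)$. I would work from the geometric representation of $W$ on a real vector space with its canonical bilinear form, which furnishes the root system, the formula for $\lg_{S(X)}(w)$ as the number of positive roots sent by $w$ to negative roots, and the \emph{Exchange Condition}. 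From the Exchange Condition one deduces the Deletion Condition, and hence the converse half of~(a): a non-reduced word contains two letters that cancel, and a sequence of braid moves (type~II) brings them into adjacency so that a type~I deletion shortens it; thus an M-reduced word is reduced. For~(b) I would invoke the Matsumoto--Tits theorem, that two reduced expressions of the same element differ by braid moves alone; its proof is by induction on $\lg_{S(X)}$, comparing via the Exchange Condition the possible initial letters of the two words.

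For part~(2), the syllables are the nontrivial powers $x_v^a$ with $a\in\Z_{f(v)}\setminus\{0\}$. The type~I operations are the merges $x_v^ax_v^b\mapsto x_v^{a+b}$ together with the cancellations when $a+b=0$ in $\Z_{f(v)}$, and, since the only relations between distinct generators are the commutations $x_ux_v=x_vx_u$ for $\{u,v\}\in E(\Gamma)$, the type~II operations are exactly the transpositions of adjacent commuting syllables: no alternating relation $\overline{[s,t]}_m=\overline{[t,s]}_m$ of syllabic length $m$ can occur for $m>2$, because two non-adjacent generators generate a free product (where the equality fails) and two adjacent generators a direct product (where the syllabic length collapses below $m$). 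I would then follow Green's normal form theory, regarding the M-operations as a rewriting system on $S(X)^*$ and proving it confluent: calling a word reduced when no commutations bring two syllables of the same generator into adjacency, a shuffling argument shows every word rewrites to such a form (giving the converse half of~(a)) and that any two reduced words for the same element differ by commutations alone (giving~(b)).

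The main obstacle, in both cases, is part~(b): that reduced words are connected by type~II operations \emph{alone}, with no recourse to the length-shortening type~I operations. For Coxeter groups this is precisely Matsumoto's theorem, whose inductive step is the delicate point; for graph products it is the uniqueness-up-to-shuffles of the normal form. Once the correct length function and the exchange (respectively cancellation) mechanism are in hand, the remaining assertions follow comparatively directly.
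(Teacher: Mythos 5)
Your outline is essentially correct, but it takes a genuinely different route from the paper. The paper gives no proof of Theorem \ref{thm2_1} at all: it is quoted as the classical result of Tits and Green, and your two separate arguments --- the geometric representation, Exchange/Deletion Condition and Matsumoto's theorem for Coxeter groups; Green's normal form and a shuffle/confluence argument for graph products --- are precisely a pr\'ecis of those cited proofs, together with the (correct and worthwhile) bookkeeping that identifies which elementary M-operations actually occur in each family. The paper instead subsumes the statement under Theorem \ref{thm2_2}: both families are Dyer groups, and Section \ref{sec3} gives one uniform proof via Dyer's module $M(D,X)$, the cocycle $N(gh)=N(g)+g\cdot N(h)$, the length formula $\lg_{S(X)}(g)=|\{\rho\mid a_\rho(g)\neq0\}|$, and the resulting Exchange Lemma \ref{lem3_2}; from there (b) is proved first by induction on length (the claim about the prefixes $[s_1,t_1]_q$), and (a) is deduced from (b). Your route buys self-containedness within each well-documented classical theory; the paper's buys uniformity and is the argument that actually generalizes. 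One small caution on your part (1): the Deletion Condition by itself does not yield that type~II moves suffice to juxtapose the cancelling pair --- that juxtaposition is exactly the content of Tits's theorem and is obtained by proving (b) first (via Exchange and induction) and then deducing (a), which is also the order of argument in the paper's proof of Theorem \ref{thm2_2}; since you do invoke Matsumoto for (b) and flag it as the delicate point, this is an issue of logical ordering rather than a gap.
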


\begin{defin}
Let $\Gamma$ be a simplicial graph with vertex set $V(\Gamma)$ and edge set $E(\Gamma)$.
We assume that $E(\Gamma)$ is endowed with a map $m:E(\Gamma)\to\N_{\ge 2}$ and that $V(\Gamma)$ is endowed with a map $f:V(\Gamma)\to\N_{\ge 2}\cup\{\infty\}$.
We further assume that, for each $e=\{u,v\}\in E(\Gamma)$, if $m(e)\neq2$, then $f(u)=f(v)=2$.
To the triple $(\Gamma,m,f)$ we associate a group $D=D(\Gamma,m,f)$, called a \emph{Dyer group}, defined by the following presentation:
\begin{gather*}
D=\langle x_v\,,\ v\in V(\Gamma)\mid x_v^{f(v)}=1\text{ for all }v\in V(\Gamma)\text{ such that }f(v)\neq\infty\,,\\
[x_u,x_v]_{m(e)}=[x_v,x_u]_{m(e)}\text{ for all }e=\{u,v\}\in E(\Gamma)\rangle\,.
\end{gather*}
\end{defin}

Observe that any Coxeter group is a Dyer group and that any graph product of cyclic groups is a Dyer group.
Moreover, it is shown in Dyer \cite{Dyer1} that the map $V(\Gamma)\to D$, $v\mapsto x_v$, is injective, hence $(D,X)$ is a marked group, where $X=\{x_v\mid v\in V(\Gamma)\}$.
This marked group is called a \emph{Dyer system}.

As pointed out in the introduction, an informed reader familiar with Coxeter groups will implicitly find the proof of the following theorem in Dyer \cite{Dyer1}.
However, the groups studied in the present paper include other groups such as right-angled  Artin groups, and therefore the paper is not addressed only to experts in Coxeter groups.
So, we give an explicit and complete proof of the following theorem in Section \ref{sec3}.

\begin{thm}\label{thm2_2}
Every Dyer system has Property $\DD$.
\end{thm}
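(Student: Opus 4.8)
The plan is to follow the strategy that Tits \cite{Tits1} used for Coxeter groups, reorganized so that the rank-two local pieces can be supplied by Theorem \ref{thm2_1} and the syllabic structure handled uniformly. I would first record the trivial half of (a): if $w$ is reduced then no finite sequence of elementary M-operations can shorten it, since such a sequence preserves the represented element $\overline{w}$ while $w$ already has minimal syllabic length; hence every reduced word is M-reduced. It remains to prove the converse inclusion in (a) together with (b). Since elementary M-operations of type I strictly decrease length and those of type II preserve it, the converse in (a) is equivalent to the assertion that any syllabic word which is \emph{not} reduced can, after a finite sequence of elementary M-operations, be made strictly shorter.

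Next I would treat the rank-two case, which supplies all the local moves. For distinct vertices $u,v\in V(\Gamma)$ the subgroup $\langle x_u,x_v\rangle$ is, by the defining relations and the constraint that $m(e)\neq2$ forces $f(u)=f(v)=2$, one of three explicit groups: a free product $\Z_{f(u)}*\Z_{f(v)}$ when $u,v$ are not joined by an edge, a direct product $\Z_{f(u)}\times\Z_{f(v)}$ when they are joined by an edge with $m(e)=2$, and a finite dihedral group when they are joined by an edge with $m(e)\ge3$. Each of these is either a graph product of cyclic groups or a Coxeter group, hence has Property $\DD$ by Theorem \ref{thm2_1}. This identifies the type-I merges (powers of a single $x_v$ collapsing via $f(v)$, and cancellations $x_v^ax_v^{-a}$) and the type-II braid moves $[s,t]_{m(e)}\mapsto[t,s]_{m(e)}$ as precisely the rewritings available inside rank-two parabolics.

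The heart of the argument, and the main obstacle, is a syllabic exchange condition: if $w=(s_1,\dots,s_\ell)$ is reduced and $t\in S(X)$ satisfies $\lg_{S(X)}(t\,\overline{w})\le\ell$, then $t$ can be absorbed into $w$ after elementary M-operations, producing a word of length $\le\ell$ representing $t\,\overline{w}$. To prove this I would construct a set of ``walls'' on which $D$ acts, generalizing the inversion sets of Tits \cite{Tits1} and the piling combinatorics of Green \cite{Green1} that underlie Theorem \ref{thm2_1}: to each syllable one attaches a wall, $D$ permutes the walls, and a syllabic word is reduced exactly when the walls successively crossed by its prefixes are pairwise distinct. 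The delicate points are to define this action so that it is compatible with all defining relations — this is where the hypothesis $m(e)\neq2\Rightarrow f(u)=f(v)=2$ is used, to make the dihedral wall-crossings consistent with the cyclic-power walls — and to check that the wall crossed by the last syllable of a non-reduced word already occurs among the earlier ones, which yields the exchange.

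Finally, granting the exchange condition, both parts of Property $\DD$ follow by the classical Tits--Matsumoto induction on $\ell$. For the non-trivial half of (a) one locates a repeated wall in a non-reduced word and uses the exchange condition together with the rank-two moves of the second step to bring the two offending syllables adjacent by type-II operations, then cancels or merges them by a type-I operation. For (b) one peels off the first syllable of one reduced word, uses the exchange condition to find and M-move a matching syllable to the front of the other, and iterates; the reorderings so produced are built from the rank-two braid moves, hence are type-II operations, which gives the required connectivity.
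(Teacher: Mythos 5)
Your proposal follows essentially the same route as the paper: the ``walls'' you propose to construct are exactly Dyer's reflection cocycle $N(g)=\sum_{\rho}a_\rho[\rho]$ valued in $M(D,X)=\bigoplus_{\rho\in R}H_\rho$ (imported in the paper as Theorem \ref{thm3_1}), which yields precisely your syllabic exchange condition (Lemma \ref{lem3_2}), and the deduction of Property $\DD$ from it is the same Tits--Matsumoto induction, with the $s_1\neq t_1$ case of (b) handled by building up the alternating words $[s_1,t_1]_q$ until the braid relation is realized. The approach is correct and matches the paper's proof.
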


As mentioned in the introduction, it is quite easy to find marked groups that have Property $\DD$ and that are not Dyer systems.
For example $(\Z,\{1,2\})$ is a marked group which has Property $\DD$ but which is not a Dyer system.
So, to make the study more coherent we impose the following additional hypothesis on marked groups.

\begin{defin}
Let $(G,X)$ be a marked group.
We say that $(G,X)$ is \emph{strongly marked} if $1\not\in X$ and it satisfies the following condition:
\begin{itemize}
\item
Let $x,y\in X$, $s\in\langle x\rangle\setminus\{1\}$ and $t\in\langle y\rangle\setminus\{1\}$.
If $st\in S(X)\cup\{1\}$, then $x=y$.
\end{itemize}
In particular, if $x,y\in X$, $x\neq y$, then $\langle x\rangle\cap\langle y\rangle=\{1\}$.
\end{defin}

Let $W=W(\Gamma,m)$ be a Coxeter group and let $X=\{x_v\mid v\in V(\Gamma)\}$ be its standard generating set.
Then $(W,X)$ is strongly marked.
Similarly, if $C=C(\Gamma,f)$ is a graph product of cyclic groups and $X=\{x_v\mid v\in V(\Gamma)\}$ is its standard generating set, then $(C,X)$ is strongly marked.
More generally, if $D=D(\Gamma,m,f)$ is a Dyer group and $X=\{x_v\mid v\in V(\Gamma)\}$, then $(D,X) $ is strongly marked (see Dyer \cite{Dyer1}).

For our next theorem we need to slightly extend the notion of Dyer group as follows.

\begin{defin}
Let $\Gamma$ be a simplicial graph with vertex set $V(\Gamma)$ and edge set $E(\Gamma)$.
We assume that $E(\Gamma)$ is endowed with a map $m:E(\Gamma)\to\N_{\ge 2}$ and that $V(\Gamma)$ is endowed with a map $f:V(\Gamma)\to\N_{\ge 2}\cup\{\infty\}$.
We further assume that, for each $e=\{u,v\}\in E(\Gamma)$,
\begin{itemize}
\item
if $m(e)>2$ and $m(e)$ is even, then $f(u)=f(v)=2$,
\item
if $m(e)>2$ and $m(e)$ is odd, then $f(u)$ and $f(v)$ are both finite and even and at least one of them is equal to $2$.
\end{itemize}
To the triple $(\Gamma,m,f)$ we associate a group $QD=QD(\Gamma,m,f)$, called a \emph{quasi-Dyer group}, defined by the following presentation:
\begin{gather*}
QD=\langle x_v\,,\ v\in V(\Gamma)\mid x_v^{f(v)}=1\text{ for all }v\in V(\Gamma)\text{ such that }f(v)\neq\infty\,,\ x_ux_v=x_vx_u\\
\text{ for all }e=\{u,v\}\in E(\Gamma)\text{ such that }m(e)=2\,,\ [x_u^{f(u)/2},x_v^{f(v)/2}]_{m(e)}=[x_v^{f(v)/2},x_u^{f(u)/2}]_{m(e)}\\
\text{ for all }e=\{u,v\}\in E(\Gamma)\text{ such that }m(e)>2\rangle\,.
\end{gather*}
The pair $(QD,X)$ is called a \emph{quasi-Dyer system}.
\end{defin}

\begin{rem}
In the above definition, if we replace the condition ``if $m(e)>2$ and $m(e)$ is odd, then $f(u)$ and $f(v)$ are both even and at least one of them is equal to $2$'' by the condition ``if $m(e)>2$ and $m(e)$ is odd, then $f(u)$ and $f(v)$ are both equal to $2$'' then $QD(\Gamma,m,f)=D(\Gamma,m,f)$ is a Dyer group.
\end{rem}

The main result of Section \ref{sec4} is the following.

\begin{thm}\label{thm2_3}
Let $(G,X)$ be a strongly marked group.
If $(G,X)$ has Property $\DD$, then $(G,X)$ is a quasi-Dyer system.
\end{thm}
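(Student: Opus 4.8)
The plan is to read the defining data $(\Gamma,m,f)$ of a quasi-Dyer system directly off $(G,X)$ and then use Property $\DD$ to certify that the resulting quasi-Dyer presentation is complete. I take $V(\Gamma)=X$ and $f(x)=o(x)$; since strong marking forces $1\notin X$, each $o(x)\ge 2$, as required. All the work is a rank-two analysis: for each unordered pair $\{x,y\}$ of distinct generators I must decide whether $\{x,y\}$ is an edge and, if so, attach a label $m(\{x,y\})$ of the correct parity, matched to the orders $o(x),o(y)$.

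The first step uses strong marking to localise the M-operations. For $x\ne y$ the product of an $x$-syllable and a $y$-syllable never lies in $S(X)\cup\{1\}$, so no elementary M-operation of type I ever mixes the two generators. Consequently every reduced syllabic word in $\langle x,y\rangle$ is an alternating product of $x$-syllables and $y$-syllables, and by Property $\DD$(b) any two reduced words for the same element are joined by elementary M-operations of type II, i.e. by braid moves $\overline{[x^a,y^b]}_m=\overline{[y^b,x^a]}_m$ of syllabic length $m$. Thus the entire rank-two structure is encoded by the set of valid braid relations between powers of $x$ and powers of $y$, and my task becomes showing that this set has exactly the shape prescribed by the quasi-Dyer axioms. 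I would define $m(\{x,y\})$ to be the smallest $m\ge 2$ admitting such a relation, declaring $\{x,y\}$ a non-edge when none exists.

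The heart of the matter, and the step I expect to be the main obstacle, is proving that Property $\DD$ rigidly constrains these braid relations; the decisive tool is $\DD$(b) applied to short coincidences. For instance, suppose a commutation $\overline{[x^a,y^b]}_2$ holds with $a=o(x)/2$, so that $x^a$ is an involution, but $o(x)>2$. Then $x^a y^b x^a=y^b$, and for a suitable exponent $c$ the two reduced words $(x^c,y^b,x^c)$ and $(x^{c+a},y^b,x^{c+a})$ represent the same element while containing no adjacent $x^a$–$y^b$ pair to swap; this contradicts $\DD$(b). Pushing such arguments I would show that a length-$2$ relation is compatible with $\DD$ only when it upgrades to full commutation $xy=yx$ (where every swap $\overline{[x^c,y^d]}_2$ is available and all coincidences are swap-explained), giving an edge with $m(\{x,y\})=2$ and no constraint on the orders. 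When the minimal relation has $m>2$, I would analyse the parity of $m$: rewriting $\overline{[x^a,y^b]}_m$ through the element $x^a y^b$ shows that an even-length relation is the dihedral relation $(x^a y^b)^m=1$, which by the same kind of coincidence argument survives only when $x^a,y^b$ are involutions with $o(x)=o(y)=2$; for odd $m$ the relation is a genuine braid between the involutions $x^{o(x)/2},y^{o(y)/2}$, and tracking how these interact with the remaining syllables forces $o(x),o(y)$ even with at least one equal to $2$. This is precisely the case division of the quasi-Dyer axioms, and it simultaneously shows that $m(\{x,y\})$ is well defined, independently of the auxiliary exponents $a,b$.

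Finally I would assemble the global presentation. The quasi-Dyer relations hold in $G$ by construction, so there is a surjective homomorphism $QD(\Gamma,m,f)\to G$ carrying generators to generators. For injectivity I use Property $\DD$ in $G$: a syllabic word representing $1$ is non-reduced unless empty, hence can be carried to the empty word by a finite sequence of elementary M-operations, and each such operation is a consequence of the quasi-Dyer relations. Indeed, the type I operations follow from the order relations $x^{o(x)}=1$ (strong marking having confined them to a single generator), while the rank-two analysis shows that every type II operation across an edge $\{x,y\}$ is a consequence of the single fundamental braid relation attached to that edge. Therefore the surjection is an isomorphism and $(G,X)$ is the quasi-Dyer system $QD(\Gamma,m,f)$.
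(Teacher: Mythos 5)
Your proposal follows essentially the same route as the paper: you read off $(\Gamma,m,f)$ from the orders of the generators and the existence of a (unique, minimal) braid relation between powers of each pair, you constrain the admissible relations by applying Property $\DD$(b) to pairs of reduced words with no available swap (this is exactly the content of the paper's Lemmas \ref{lem4_1}--\ref{lem4_4}, which you state correctly but only sketch), and you conclude by showing the surjection $QD(\Gamma,m,f)\to G$ is injective because every elementary M-operation is a consequence of the quasi-Dyer relations (the paper packages this as a retraction $\psi$ with $\psi\circ\varphi=\id$, a cosmetic difference). The argument is correct in outline and matches the paper's proof.
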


Let $(G,X)$ be a strongly marked group having Property $\DD$.
Let $\Gamma$ be a simplicial graph and let $m:E(\Gamma)\to\N_{\ge 2}$ and $f:V(\Gamma)\to\N_{\ge 2}\cup\{\infty\}$ be maps with the right conditions so that $(\Gamma,m,f)$ defines a quasi-Dyer group with $G=QD(\Gamma,m,f)$ and $X=\{x_v \mid v\in V(\Gamma)\}$.
Then we say that $QD(\Gamma,m,f)$ is a \emph{quasi-Dyer presentation} for $(G,X)$.

We do not know if the reciprocal of Theorem \ref{thm2_3} is true, that is, if all quasi-Dyer systems have Property $\DD$.
We do not know either if quasi-Dyer systems are all strongly marked groups. 
However, we know a family of quasi-Dyer systems that are not Dyer systems, that are strongly marked, and that have Property $\DD$.
The following result is proved in Section \ref{sec5}.

\begin{prop}\label{prop2_4}
Let $m\ge3$ odd and $k\ge2$.
Let 
\[
QD_{m,k}=\langle x,y\mid x^2=y^{2k}=1\,,\ [x,y^k]_m=[y^k,x]_m\rangle\,.
\]
Then $(QD_{m,k},\{x,y\})$ is strongly marked and has Property $\DD$.
\end{prop}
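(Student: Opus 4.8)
The plan is to identify $QD_{m,k}$ as an amalgamated free product and to deduce Property $\DD$ from the reduction theory of its two factors, glued along the shared involution $y^k$. Put $z=y^k$. The relations $x^2=1$, $z^2=y^{2k}=1$ and $[x,z]_m=[z,x]_m$ present the dihedral group $B$ of order $2m$ on the generating set $\{x,z\}$; this is a Coxeter group, so it is covered by Theorem \ref{thm2_1}(1). The factor $A=\Z/2k\Z$ is generated by $y$, with $z=y^k$ its unique involution, and $C=\langle z\rangle\cong\Z/2\Z$ sits inside both $A$ and $B$. Eliminating the generator $z$ via $z=y^k$ shows that the standard presentation of the amalgam $A*_CB$ is exactly the presentation of $QD_{m,k}$, whence $QD_{m,k}\cong A*_CB$. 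By the normal form theorem for amalgams both factors embed; in particular $o(x)=2$, $o(y)=2k$, and $\langle x,y^k\rangle\cong B$. The syllable set is $S(\{x,y\})=\{x\}\cup\{y^j\mid 1\le j\le 2k-1\}$.

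First I would check that $(QD_{m,k},\{x,y\})$ is strongly marked. As $1\notin\{x,y\}$ and $x,y$ are the only two distinct generators, the condition reduces to showing that $xy^j$ and $y^jx$ lie outside $S(\{x,y\})\cup\{1\}$ for every $1\le j\le 2k-1$. If $j\neq k$ then $y^j\in A\setminus C$, so $xy^j$ is a reduced length-two element of the amalgam, hence neither trivial nor a single syllable. If $j=k$ then $xy^k=xz$ has length $2$ in the Coxeter group $B$, so it differs from $1$, $x$ and $y^k$, and, lying in $B$, from every $y^i$ with $i\neq0,k$. The cases involving only one generator hold trivially, so the group is strongly marked.

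Next I would list all elementary M-operations. A type I operation merges two adjacent syllables whose product is a syllable or $1$; by the previous paragraph no product $x\,y^j$ qualifies, so the type I operations are the merges $y^i\cdot y^j\mapsto y^{i+j}$, which become cancellations when $i+j\equiv0$ modulo $2k$, together with the cancellation $x\cdot x$. A type II operation is a braid relation $[s,t]_{m'}=[t,s]_{m'}$ of syllabic length $m'$; uniqueness of normal forms in $A*_CB$ shows that the only admissible pair is $(x,y^k)$ with $m'=m$, that is, the moves $[x,y^k]_m\leftrightarrow[y^k,x]_m$ (for $(y^i,y^j)$ the two sides already coincide as one syllable, and for $(x,y^j)$ with $j\neq k$ they are distinct reduced amalgam words). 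Hence, after exhausting type I, every M-reduced word has the alternating shape $b_0\,y^{c_1}\,b_1\cdots y^{c_r}\,b_r$, where each separator $y^{c_i}$ satisfies $c_i\neq k$ (it is an $A\setminus C$ letter) and each $b_i$ is a reduced $\{x,y^k\}$-word, i.e. an element of $B$.

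Finally I would establish Property $\DD$ by matching this shape with the amalgam normal form of $g=\overline{w}$. The implication ``reduced $\Rightarrow$ M-reduced'' is immediate, since M-operations preserve $g$ and cannot shorten a word of minimal syllabic length. For the converse one shows that in an M-reduced word the separators $y^{c_i}$ are precisely the $A\setminus C$ factors of the normal form of $g$ and the blocks $b_i$ are its $B$-factors, and that inside each block the syllabic length coincides with the Coxeter length in $B$ (Theorem \ref{thm2_1}(1)); uniqueness of the normal form then gives $|w|=\lg_{S(\{x,y\})}(g)$, so M-reduced words are reduced, which is part (a). The same correspondence shows that the only freedom left in a reduced word is the choice of reduced expression inside a $B$-block, and two reduced expressions of a dihedral element differ by iterated braid moves $[x,y^k]_m\leftrightarrow[y^k,x]_m$, i.e. by type II operations, which is part (b). The crux is the converse in part (a): because $y^k$ lies in the amalgamated subgroup $C$, a block admitting $y^k$ as a boundary descent---most strikingly the longest element of $B$, which has both $x$ and $y^k$ as left and right descents---can be rewritten by a type II move so as to expose a $y^k$ that then merges, by type I, with a neighbouring separator and shortens the word. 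Pinning down exactly which alternating words survive this (the descent conditions forbidding a $y^k$ next to a separator) and verifying that the surviving length equals the amalgam length is where the main work lies.
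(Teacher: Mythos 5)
Your identification of $QD_{m,k}$ as the amalgam $D_m*_{\langle y^k\rangle}C_{2k}$ and your verification that $(QD_{m,k},\{x,y\})$ is strongly marked are correct and coincide with the paper's Lemma \ref{lem5_1}. For Property $\DD$, however, what you give is a strategy rather than a proof, and you say so yourself: ``pinning down exactly which alternating words survive this \dots is where the main work lies.'' That deferred step is precisely the content of both parts (a) and (b). Two concrete points are left open. First, the asserted correspondence between the separators $y^{c_i}$ and the $A\setminus C$ factors of the amalgam normal form is not canonical: since $y^{2k}=1$, one may replace $b\cdot(y^c)$ by $(b\,y^k)\cdot(y^{c+k})$, shifting the letter $y^k$ between a $B$-block and an adjacent separator. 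You must therefore prove that in an M-reduced word no block has $y^k$ as a descent on the side facing a separator (otherwise a type II move followed by a type I merge shortens the word), and that this boundary condition makes the block/separator decomposition unique; without it, neither the equality of syllabic length with the amalgam-plus-dihedral length (part (a)) nor the claim that the only remaining freedom is intra-block braid moves (part (b)) is established. Second, the $m$-dependent phenomenon you gesture at (e.g.\ for $m=3$ the block $xy^kx=y^kxy^k$ exposes a $y^k$ that merges into a neighbouring separator) is exactly the case analysis that must be carried out in full.

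For comparison, the paper proves this half by a completely different method: it exhibits an explicit Noetherian rewriting system on $S(\{x,y\})^*$ whose rules are the syllable merges and cancellations together with the single braid rule $[x,y^k]_m\to[y^k,x]_m$, verifies confluence by resolving a finite list of critical pairs (Theorems \ref{thm5_2} and \ref{thm5_3}), and then deduces Property $\DD$ from uniqueness of the $R$-irreducible normal form, using that every rewriting step is an elementary M-operation and that length considerations force the steps applied to an M-reduced word to be of type II. Your amalgam approach can in principle be completed, but the descent analysis it requires is exactly where the subtleties lie; the critical-pair computation is finite and mechanical, so I would recommend either writing out the boundary-descent argument in full or switching to the rewriting-system proof.
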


\begin{defin}
If $(G,X)$ is a marked group and $Y\subset X$, then we denote by $G_Y$ the subgroup of $G$ generated by $Y$ and we say that $(G_Y,Y)$ is a \emph{standard (marked) parabolic subgroup} of $(G,X)$.
If $Y\subset X$ and $g\in G$, then $(gG_Yg^{-1},gYg^{-1})$ is simply called a \emph{(marked) parabolic subgroup}.
\end{defin}

Let $(G,X)$ be a strongly marked group having Property $\DD$.
The \emph{support} of a syllabic word $w=(x_1^{a_1},x_2^{a_2},\dots,x_\ell^{a_\ell})$ is $\Supp(w)=\{x_1,x_2,\dots,x_\ell\}$.
This is well-defined since $(G,X)$ is strongly marked.
Let $g\in G$.
We choose a reduced syllabic form $w=(x_1^{a_1},x_2^{a_2},\dots,x_\ell^{a_\ell})$ for $g$ and we define the \emph{support} of $g$ as $\Supp(g)=\Supp(w)=\{x_1,\dots,x_\ell\}\subset X$.
Since one can go from a reduced syllabic form of $g$ to another through a finite sequence of elementary M-operations of type II and elementary M-operations of type II do not change supports of syllabic words, the definition of $\Supp(g)$ does not depend on the choice of the reduced syllabic form.
On the other hand, it is easily seen that, if $w$ and $w'$ are two syllabic  words such that one can go from $w$ to $w'$ through a finite sequence of elementary M-operations, then $\Supp(w)\supset\Supp(w')$.
This proves the following.

\begin{lem}\label{lem2_5}
Let $(G,X)$ be a strongly marked group with Property $\DD$.
Let $Y\subset X$ and $g\in G$.
We have $g\in G_Y$ if and only if $\Supp(g)\subset Y$.
\end{lem}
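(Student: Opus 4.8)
The statement is an equivalence, and the easy direction is the forward-trivial one hidden in the backward implication. The plan is to prove both directions using the already-established machinery of supports and M-operations. For the implication ``$\Supp(g)\subset Y\Rightarrow g\in G_Y$'', I would choose a reduced syllabic form $w=(x_1^{a_1},\dots,x_\ell^{a_\ell})$ for $g$. By definition of support, every letter $x_i$ appearing in $w$ lies in $\Supp(g)\subset Y$, so each syllable $x_i^{a_i}$ is a power of a generator in $Y$ and hence lies in $G_Y$; therefore $g=\overline{w}=x_1^{a_1}\cdots x_\ell^{a_\ell}\in G_Y$. This direction uses only the definition of $\Supp$ via a reduced word and needs no deep input.

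The substantive direction is ``$g\in G_Y\Rightarrow\Supp(g)\subset Y$''. Suppose $g\in G_Y$. Then $g$ can be written as a product of elements of $S(Y)=\{y^a\mid y\in Y,\ a\in\Z_{o(y)}\setminus\{0\}\}$, so there is a syllabic word $w_0\in S(Y)^*\subset S(X)^*$ with $\overline{w_0}=g$ and $\Supp(w_0)\subset Y$. Now I would pass from $w_0$ to a reduced form of $g$. By Property $\DD$ part (a), a reduced word is the same as an M-reduced word, so applying a finite sequence of elementary M-operations to $w_0$ yields a reduced syllabic word $w$ representing $g$. The key observation, already recorded in the excerpt just before the lemma, is that elementary M-operations never enlarge the support: if one passes from $w$ to $w'$ by such an operation then $\Supp(w)\supset\Supp(w')$. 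Consequently $\Supp(g)=\Supp(w)\subset\Supp(w_0)\subset Y$, which is exactly what we want.

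The only point requiring a little care—and the place I would flag as the main (minor) obstacle—is guaranteeing that the support of $g$ computed from the reduced word $w$ obtained by M-reducing $w_0$ agrees with the support $\Supp(g)$ defined earlier, which was shown to be independent of the chosen reduced form via Property $\DD$ part (b). Since $\Supp(g)$ is well-defined, one may compute it using any reduced representative of $g$, in particular using $w$; this is precisely what makes the chain of inclusions above legitimate. Strong markedness is used implicitly throughout to ensure that $S(X)$-syllables are unambiguous and that $\Supp$ of a syllabic word is well-defined, but beyond that the argument is a direct application of the monotonicity of support under M-operations together with the equivalence of reduced and M-reduced words.
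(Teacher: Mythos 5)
Your argument is correct and is essentially identical to the paper's: the paper likewise derives the lemma from the well-definedness of $\Supp(g)$ (via Property $\DD$\,(b)) together with the observation that elementary M-operations can only shrink the support of a syllabic word, applied to an $S(Y)^*$-word representing $g$. No discrepancies to report.
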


A direct consequence of this lemma is the following.

\begin{corl}\label{corl2_6}
Let $(G,X)$ be a strongly marked group with Property $\DD$.
For $Y,Y'\subset X$ we have $G_Y\cap G_{Y'}=G_{Y\cap Y'}$.
\end{corl}

Let $(G,X)$ be a strongly marked group with Property $\DD$ and let $QD(\Gamma,m,f)$ be a quasi-Dyer presentation for $(G,X)$.
For $U\subset V(\Gamma)$ we denote by $\Gamma_U$ the full subgraph of $\Gamma$ spanned by $U$, we denote by $m_U:E(\Gamma_U)\to\N_{\ge 2}$ the restriction of $m$ to $E(\Gamma_U)$, and we denote by $f_U:V(\Gamma_U)\to\N_{\ge2}\cup\{\infty\}$ the restriction of $f$ to $V(\Gamma_U)$.
The first result which is proved in Section \ref{sec6} is the following.

\begin{prop}\label{prop2_7}
Let $(G,X)$ be a strongly marked group with Property $\DD$ and let $QD(\Gamma,m,f)$ be a quasi-Dyer presentation for $(G,X)$.
Let $U\subset V(\Gamma)$ and $Y=\{x_u\mid u\in U\}$.
Then $(G_Y,Y)$ is a strongly marked group with Property $\DD$ and $QD(\Gamma_U,m_U,f_U)$ is a quasi-Dyer presentation for $(G_Y,Y)$.
\end{prop}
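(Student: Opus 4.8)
The plan is to deduce Property $\DD$ and strong markedness of $(G_Y,Y)$ directly from Lemma \ref{lem2_5}, and then to identify the restricted presentation. First I would compare syllabic lengths. Since $S(Y)\subseteq S(X)$, any syllabic word over $S(Y)$ is one over $S(X)$, so $\lg_{S(X)}(g)\le\lg_{S(Y)}(g)$ for every $g\in G_Y$. Conversely, if $w$ is a reduced $S(X)$-word representing $g\in G_Y$, then $\Supp(w)=\Supp(g)\subseteq Y$ by Lemma \ref{lem2_5}, so $w$ is already a word over $S(Y)$; hence $\lg_{S(X)}(g)=\lg_{S(Y)}(g)$, and on $G_Y$ the notions ``reduced over $S(X)$'' and ``reduced over $S(Y)$'' coincide.

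Next I would show that the two notions of elementary M-operation agree on $S(Y)^*$. In a type I operation on an $S(Y)$-word the two merged syllables are $s=x^a$, $t=y^b$ with $x,y\in Y$ and $st\in S(X)\cup\{1\}$; strong markedness of $(G,X)$ forces $x=y$, so $st\in S(Y)\cup\{1\}$ and the output stays in $S(Y)^*$. In a type II operation the relation $\overline{[s,t]}_m=\overline{[t,s]}_m$ is between elements of $G_Y$, so its length condition over $S(X)$ agrees with that over $S(Y)$ by the previous paragraph, and again the output stays in $S(Y)^*$. Therefore every finite sequence of M-operations issued from an $S(Y)$-word remains in $S(Y)^*$ and is exactly a sequence of M-operations of $(G_Y,Y)$; parts (a) and (b) of Property $\DD$ for $(G_Y,Y)$ then follow by restricting the corresponding statements for $(G,X)$ to $S(Y)^*$. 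Strong markedness of $(G_Y,Y)$ is immediate, since $1\notin Y\subseteq X$ and its defining condition (for $x,y\in Y$ and $st\in S(Y)\cup\{1\}\subseteq S(X)\cup\{1\}$) is a special case of that for $(G,X)$.

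It remains to identify the presentation. For $u\in U$ the order of $x_u$ is intrinsic, hence the same, namely $f(u)$, in $G_Y$ and in $G$; thus the vertex data of a quasi-Dyer presentation of $(G_Y,Y)$ is $f_U$. All defining relations of $QD(\Gamma_U,m_U,f_U)$ hold in $G_Y$, giving a natural surjection $\phi\colon QD(\Gamma_U,m_U,f_U)\twoheadrightarrow G_Y$, $x_u\mapsto x_u$, which I must prove injective. If $\phi(g)=1$, represent $g$ by a word $w\in S(Y)^*$; viewed over $S(X)$ it represents $1$ in $G$, so by Property $\DD$ for $(G,X)$ a finite sequence of M-operations carries $w$ to the empty word. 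By the previous step this sequence stays in $S(Y)^*$, and each of its operations is an equality that already holds in $QD(\Gamma_U,m_U,f_U)$: a type I operation is multiplication inside a single cyclic factor $\langle x_u\rangle$ (governed by $x_u^{f(u)}=1$), and a type II operation is the commutation or braid relation attached to an edge of $\Gamma_U$. Hence $g=1$ in $QD(\Gamma_U,m_U,f_U)$, so $\phi$ is an isomorphism.

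The main obstacle is the claim that a type II operation between two generators $x_u,x_{u'}$ with $u,u'\in U$ is precisely an edge relation of $\Gamma_U$. Such an operation necessarily involves distinct generators (a relation $\overline{[s,t]}_m=\overline{[t,s]}_m$ of syllabic length $m\ge2$ cannot be built from powers of a single generator), and the pair $\{u,u'\}$ together with the label $m$ reproduces exactly the commutation or braid relation defining an edge of $\Gamma$. I would justify this by noting that the labeled graph underlying a quasi-Dyer presentation is intrinsic to the marked group: the vertex labels are the orders $o(x_v)$, and an edge $\{u,v\}$ with label $m$ is detected by the commutation (for $m=2$) or braid (for $m>2$) relation between the relevant syllables, subject to the prescribed syllabic length. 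Since all this data is read off from equalities and syllabic lengths that are unchanged under $G_Y\hookrightarrow G$, the intrinsic data of $(G_Y,Y)$ produced by Theorem \ref{thm2_3} is exactly the restriction to $U$ of that of $(G,X)$, namely $(\Gamma_U,m_U,f_U)$. The technical work thus concentrates in verifying this intrinsic characterisation of edges, which rests on the analysis of M-operations carried out in Section \ref{sec3}.
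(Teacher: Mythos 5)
Your proof is correct and follows essentially the same route as the paper's: the key observation in both is that elementary M-operations preserve $S(Y)^*$ and coincide with the M-operations of $(G_Y,Y)$ (via $\lg_{S(X)}=\lg_{S(Y)}$ on $G_Y$, from Lemma \ref{lem2_5}), so Property $\DD$ and strong markedness restrict. You spell out the identification of $QD(\Gamma_U,m_U,f_U)$ with $(G_Y,Y)$ in more detail than the paper, which simply appeals to the definition of a quasi-Dyer presentation, but the content is the same.
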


The second result of Section \ref{sec6} concerns only Dyer systems and not strongly marked groups with Property $\DD$.
Indeed, as shown in Example \ref{expl2_9}, this result does not hold for all strongly marked groups with Property $\DD$.

\begin{prop}\label{prop2_8}
Let $(D,X)$ be a Dyer system, $Y\subset X$, and $g\in D$.
\begin{itemize}
\item[(1)]
There exists a unique element $g_0$ in $gD_Y$ of minimal syllabic length, and this element satisfies $\lg_{S(X)}(g_0h)=\lg_{S(X)}(g_0)+\lg_{S(X)}(h)$ for all $h\in D_Y$.
\item[(2)]
There exists a unique element $g_0$ in $D_Yg$ of minimal syllabic length, and this element satisfies $\lg_{S(X)}(hg_0)=\lg_{S(X)}(h)+\lg_{S(X)}(g_0)$ for all $h\in D_Y$.
\end{itemize}
\end{prop}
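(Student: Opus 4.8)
The plan is to deduce the whole statement from a single \emph{concatenation lemma} and to prove that lemma by an exchange-type argument modeled on the classical Coxeter case. First, part~(2) follows from part~(1): the map $g\mapsto g^{-1}$ is a length-preserving bijection (if $w=(s_1,\dots,s_\ell)$ is a reduced word for $g$, then $(s_\ell^{-1},\dots,s_1^{-1})$ is a word of the same length for $g^{-1}$, since $S(X)$ is stable under inversion, whence $\lg_{S(X)}(g^{-1})=\lg_{S(X)}(g)$), and it carries the left coset $gD_Y$ onto the right coset $D_Yg^{-1}$ while turning the right additivity of~(1) into the left additivity of~(2). So I would only treat~(1). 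Existence of a minimal-length element $g_0$ is immediate since $\lg_{S(X)}$ takes values in $\N$. By Lemma~\ref{lem2_5} every $h\in D_Y$ admits a reduced word $u$ with $\Supp(u)\subset Y$, and by Proposition~\ref{prop2_7} the pair $(D_Y,Y)$ again has Property~$\DD$, so reduced words behave inside $D_Y$ exactly as in $(D,X)$. Everything then reduces to the \textbf{Concatenation Lemma}: \emph{if $w_0$ is a reduced word for a minimal-length element $g_0$ of $gD_Y$ and $u$ is a reduced word with $\Supp(u)\subset Y$, then $w_0\cdot u$ is reduced.} Granting this, additivity is immediate, since $\lg_{S(X)}(g_0h)$ equals the length of $w_0\cdot u$, namely $\lg_{S(X)}(g_0)+\lg_{S(X)}(h)$; and uniqueness follows at once, for if $g_1=g_0h$ is another minimal element then $\lg_{S(X)}(g_0)=\lg_{S(X)}(g_1)=\lg_{S(X)}(g_0)+\lg_{S(X)}(h)$ forces $\lg_{S(X)}(h)=0$, i.e. $h=1$.

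The tool I would extract from Property~$\DD$ is an \emph{exchange condition} for Dyer systems: if $A$ is a reduced word and $t=x_v^a\in S(X)$ is a syllable such that $A\cdot(t)$ is not reduced, then $\overline{A}$ has a reduced word ending in a power of $x_v$. To prove it, I note that by Theorem~\ref{thm2_2} the word $A\cdot(t)$ is not M-reduced, so some sequence of elementary M-operations shortens it; the first length-decreasing step is an operation of type~I merging two syllables that are powers of a common generator $x_u$. The key point is that this merge must involve the appended syllable $t$, so that $u=v$: if it merged two syllables of $A$, then---using that in a Dyer group the type~II moves are braid moves among \emph{order-$2$} generators together with commutations---the same moves carried out inside $A$ alone (discarding those that merely reposition $t$) would already make $A$ M-reducible, contradicting that $A$ is reduced. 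Tracking the surviving syllable of $A$ with which $t$ merges then yields the desired reduced word for $\overline{A}$ ending in a power of $x_v$.

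With the exchange condition in hand I would prove the Concatenation Lemma by induction on $\lg_{S(X)}(h)=\ell$, the case $\ell=0$ being trivial. Write $u=u'\cdot(t_\ell)$ with $t_\ell=x_v^a$, $v\in Y$; by induction $w_0\cdot u'$ is reduced and represents $g_0h'$, where $h'=\overline{u'}$. If $w_0\cdot u'\cdot(t_\ell)$ were not reduced, the exchange condition would give a reduced word for $g_0h'$ ending in a power $x_v^b$ of $x_v$. I then analyze where this final syllable sits, using Property~$\DD$(b): any two reduced words for $g_0h'$ differ by type~II moves only, these preserve supports, and a descent that travels only among the suffix coming from $u'$ can be realized inside $u'$ alone, while one confined to the prefix can be realized inside $w_0$ alone. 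Thus either the descent is internal to $u'\cdot(t_\ell)$, contradicting that $u=u'\cdot(t_\ell)$ is reduced in $(D_Y,Y)$; or it propagates into $w_0$, producing---exactly as in the deletion-condition proof for Coxeter groups---a strictly shorter element of the coset $g_0D_Y$, obtained by conjugating a syllable of $w_0$ away by an element of $D_Y$, which contradicts the minimality of $g_0$.

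The main obstacle is precisely the exchange condition together with this block analysis, that is, controlling how type~II (braid) moves may carry a descent across the boundary between $w_0$ and $u$. This is also where one genuinely uses that $(D,X)$ is a Dyer system and not merely a strongly marked group with Property~$\DD$: the Dyer hypothesis that every braid relation with $m(e)>2$ is between order-$2$ generators prevents the type~II moves from entangling the two blocks in an uncontrolled way. Without it the conclusion can fail---the minimal coset representative need no longer be unique---and this is exactly what Example~\ref{expl2_9} exhibits for a strongly marked group with Property~$\DD$ that is not a Dyer system.
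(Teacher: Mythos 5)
Your high-level plan (reduce (2) and (1) to each other by inversion, reduce everything to additivity of lengths, and prove additivity by induction on $\lg_{S(X)}(h)$ using an exchange condition) is a legitimate alternative to the paper's argument, but your proof of the exchange condition has a genuine gap. You claim that if $A$ is reduced and $A\cdot(t)$ is not, then the first length-decreasing type I merge in a reducing sequence of M-operations must involve the appended syllable $t$, because otherwise one could carry out ``the same moves inside $A$ alone, discarding those that merely reposition $t$''. A type II move with $m(e)\ge 3$ does not merely reposition a syllable: $[s,t]_m\to[t,s]_m$ replaces an entire alternating block, so after, say, $(a,b,a)\to(b,a,b)$ there is no well-defined ``same syllable'' to track and no canonical way to delete the contribution of $t$ from the subsequent moves. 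This entanglement across braid blocks is exactly what you yourself identify as ``the main obstacle'', and neither your sketch of the exchange condition nor the ``block analysis'' in the inductive step of the Concatenation Lemma (where you again must decide whether a descent lives in the $w_0$-block or the $u'$-block after arbitrarily many type II moves) actually overcomes it. Note also that only half of your exchange condition follows formally from Property $\DD$: when $\lg_{S(X)}(\overline{A}t)=\lg_{S(X)}(\overline{A})-1$ one appends $t^{-1}$ to a reduced word for $\overline{A}t$; but the absorption case $\lg_{S(X)}(\overline{A}t)=\lg_{S(X)}(\overline{A})$, which does occur since syllables have arbitrary order, reproduces the same tracking problem.

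The paper avoids all of this by working with Dyer's reflection cocycle (Theorem \ref{thm3_1}): $N(hg_0)=N(h)+h\cdot N(g_0)$, the syllabic length is the number of reflections in the support of $N$, the reflections occurring in $N(h)$ lie in $R_Y=\{kyk^{-1}\mid k\in D_Y,\ y\in Y\}$, and minimality of $g_0$ in $D_Yg$ forces the reflections occurring in $N(g_0)$ to avoid $R_Y$; disjointness of the two supports gives additivity in one stroke, with no induction. The correct form of your exchange condition is the paper's Lemma \ref{lem3_2}, itself proved via $N$ and not via the M-operation calculus. If you want to keep your inductive scheme, import Lemma \ref{lem3_2} rather than trying to extract exchange from Property $\DD$; with it, your dichotomy (the exchanged syllable sits in $u'$, contradicting that $u$ is reduced, or in $w_0$, producing an element $g_0\tau^{c}\in g_0D_Y$ shorter than $g_0$ with $\tau=h'x_vh'^{-1}\in D_Y$) can be made precise --- this is essentially how the paper argues in Lemma \ref{lem6_1}.
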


\begin{expl}\label{expl2_9}
Let $G=QD_{3,2}=\langle x,y\mid x^2=y^4=1\,,\ xy^2x=y^2xy^2\rangle$ and $X=\{x,y\}$.
We know by Proposition \ref{prop2_4} that $(G,X)$ is strongly marked and has Property $\DD$.
Let $Y=\{x\}$ and $g=yxy^2$.
Then $gG_Y$ has two elements, $g=yxy^2$ and $gx=y^3xy^2$, and $\lg_{S(X)}(g)=\lg_{S(X)}(gx)=3$.
In particular $gG_Y$ does not have a unique element of minimal syllabic length.
\end{expl}

The main result of Section \ref{sec6} is the following.

\begin{thm}\label{thm2_10}
Let $(D,X)$ be a Dyer system of finite type and let $\{P_i\mid i\in I\}$ be a non-empty collection of parabolic subgroups of $D$.
Then $\bigcap_{i\in I}P_i$ is a parabolic subgroup of $D$.
\end{thm}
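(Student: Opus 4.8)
The plan is to reduce, in several steps, to a single Kilmoyer-type statement about minimal double coset representatives, and then to prove that statement by a direct analysis of reduced syllabic words using Property $\DD$ together with Proposition \ref{prop2_8}. First I would pass from an arbitrary collection to a finite one. Conjugation is an automorphism of $D$ that preserves parabolicity and commutes with intersection, and $X$ is finite; so I would attach to each parabolic subgroup $P=gD_Yg^{-1}$ the rank $\mathrm{rk}(P)=|Y|$, check (using Proposition \ref{prop2_7}, which makes every standard parabolic a Dyer system, and the pairwise case below) that it is well defined and strictly monotone under proper inclusion of parabolic subgroups, and deduce the descending chain condition. The family $\{\bigcap_{i\in J}P_i: J\subseteq I\text{ finite}\}$ is then directed downward and, granting the pairwise case and an easy induction, consists of parabolic subgroups; a minimal member of it equals $\bigcap_{i\in I}P_i$. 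Thus it suffices to treat the intersection of two parabolic subgroups.

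Next I would normalize the pair. After conjugation I may assume $P_1=D_Y$ is standard and $P_2=gD_Zg^{-1}$. Choosing $d$ of minimal syllabic length in the double coset $D_YgD_Z$ and writing $g=adb$ with $a\in D_Y$ and $b\in D_Z$, one has $gD_Zg^{-1}=adD_Zd^{-1}a^{-1}$, whence $D_Y\cap gD_Zg^{-1}=a\,(D_Y\cap dD_Zd^{-1})\,a^{-1}$ because $a\in D_Y$. So the theorem reduces to the following \emph{Main Lemma}: if $d$ has minimal syllabic length in $D_YdD_Z$, then $D_Y\cap dD_Zd^{-1}$ is a standard parabolic subgroup $D_{Y_0}$ with $Y_0\subseteq Y$. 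Minimality of $d$ in its double coset forces it to be minimal in $dD_Z$ and in $D_Yd$, so Proposition \ref{prop2_8} yields the additivity $\lg_{S(X)}(dv)=\lg_{S(X)}(d)+\lg_{S(X)}(v)$ for $v\in D_Z$ and $\lg_{S(X)}(ud)=\lg_{S(X)}(u)+\lg_{S(X)}(d)$ for $u\in D_Y$.

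To prove the Main Lemma I would set $Y_0=\{x\in Y:\ d\langle y\rangle d^{-1}=\langle x\rangle\text{ for some }y\in Z\}$ and show $N:=D_Y\cap dD_Zd^{-1}=D_{Y_0}$. The inclusion $D_{Y_0}\subseteq N$ is immediate, since each $\langle x\rangle=d\langle y\rangle d^{-1}$ lies in $D_Y\cap dD_Zd^{-1}$ and $N$ is a subgroup. For $N\subseteq D_{Y_0}$ I would take $1\neq w\in N$, write $w=dzd^{-1}$ with $z\in D_Z$, and note from the two additivity identities together with $wd=dz$ that $\lg_{S(X)}(w)=\lg_{S(X)}(z)$. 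I would then compare the two reduced syllabic words $w_w\cdot w_d$ and $w_d\cdot w_z$ representing the common element $wd=dz$: by part (b) of Property $\DD$ they differ by a finite sequence of elementary M-operations of type II. Following how the syllables of $z$ are transported leftward across the block $w_d$ by these braid and commutation moves, I would show that each such syllable $y^b$ emerges as a syllable $d y^b d^{-1}=x^a$ of $w$ with $x\in Y$, and hence that $w$ has a reduced syllabic word whose syllables all have generator in $Y_0$, giving $w\in D_{Y_0}$; the base case $d=1$ is Corollary \ref{corl2_6}.

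The hard part will be this transport argument, and in particular proving that the conjugation by $d$ is \emph{full}, i.e. that $d\langle y\rangle d^{-1}$ is all of $\langle x\rangle$ rather than a proper subgroup $\langle x^c\rangle$. This is essential: otherwise one could only conclude that some power $x^a$ lies in $N$, and a subgroup such as $\langle x^2\rangle\subsetneq\langle x\rangle$ is not a standard parabolic subgroup, so the theorem would fail. Here I expect to use the defining condition of Dyer groups in two regimes. Generators of order $2$ (the Coxeter-type part) make fullness automatic because $\langle x\rangle=\{1,x\}$. Generators of order greater than $2$ occur only in commuting relations ($m(e)=2$), so a conjugacy $dy^bd^{-1}=x^a$ between two such generators forces, via the graph-product structure, $x=y$, and then the minimality of $d$ and the length additivity of Proposition \ref{prop2_8} guarantee that no syllable is lost. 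Handling these two cases uniformly inside the inductive type-II analysis, while keeping $d$ minimal in its double coset at each stage, is where the real work lies; once the Main Lemma is in hand, the reduction to finite intersections and the well-definedness and monotonicity of rank are comparatively routine.
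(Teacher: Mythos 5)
Your overall architecture coincides with the paper's: reduce the arbitrary intersection to a finite one via strict monotonicity of the rank $|Y|$ under proper inclusion of parabolic subgroups (this is Lemma \ref{lem6_3} plus the concluding argument of the paper), reduce the pairwise case to a minimal double coset representative (Lemma \ref{lem6_2}), and prove a Kilmoyer-type identity $D_Y\cap dD_Zd^{-1}=D_{Y_0}$ (Lemma \ref{lem6_1}). The genuine gap is in the proof of your Main Lemma, precisely the part you flag as ``where the real work lies''. The transport argument does not work as described: an elementary M-operation of type II with $m\ge 3$ replaces $[s,t]_m$ by $[t,s]_m$ and so does not preserve the identity of individual syllables (for instance $(s,t,s)\mapsto(t,s,t)$), hence ``following how the syllables of $z$ are transported leftward across the block $w_d$'' is not a well-defined procedure. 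What \emph{is} invariant under such moves is not a syllable but the associated reflection $ux_iu^{-1}$ (with $u$ the prefix preceding the syllable) together with its exponent; making this precise is exactly the content of Dyer's cocycle $N$ (Theorem \ref{thm3_1}). Likewise your fullness claim for generators of order greater than $2$ --- that $dy^bd^{-1}=x^a$ forces $x=y$ ``via the graph-product structure'' --- is asserted rather than proved, and it is not immediate, since a Dyer group is not a graph product in general and $d$ may involve order-two generators interacting through braid relations.

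The paper closes both holes at once with the Exchange Lemma (Lemma \ref{lem3_2}), a consequence of the $N$-function: if left multiplication by a syllable $x_0^{a_0}$ does not increase syllabic length, then $x_0$ equals one of the reflections $\rho_i$ attached to a reduced expression, i.e.\ a conjugate of a \emph{generator} and never of a proper power, so fullness is automatic; minimality of $d$ in the double coset rules out the exchange landing inside the $d$-block, and an induction on $\lg_{S(X)}(w)$ replaces the transport argument entirely. To complete your proposal along your own lines you would essentially have to reprove that lemma; the efficient fix is to import Lemma \ref{lem3_2} (and Theorem \ref{thm3_1}) and run the induction of Lemma \ref{lem6_1}. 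The surrounding reductions in your proposal are correct and match the paper's.
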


We do not know if this result remains true if we remove the hypothesis ``to be of finite type'', however we can prove that the intersection of two parabolic subgroups is always a parabolic subgroup (see Lemma \ref{lem6_2}).

The following consequence of Theorem \ref{thm2_10} is widely used in the theory of Coxeter groups (see Kammer \cite{Kramm1}, for example).
On the other hand, an equivalent statement for Artin groups is one of the central questions in the field.

\begin{corl}\label{corl2_11}
Let $(D,X)$ be a Dyer system of finite type and let $A$ be a subset of $D$.
Then there exists a smallest (for the inclusion) parabolic subgroup containing $A$.
\end{corl}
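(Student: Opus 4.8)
The plan is to apply Theorem \ref{thm2_10} to the family of \emph{all} parabolic subgroups that contain $A$, and then read off minimality from the intersection. First I would check that this family is non-empty. Taking $g=1$ and $Y=X$ shows that $D=G_X$ is itself a parabolic subgroup of $D$, and it trivially contains $A$ since $A\subset D$. Hence the family
\[
\{P_i\mid i\in I\}=\{P\mid P\text{ is a parabolic subgroup of }D\text{ with }A\subset P\}
\]
is a non-empty collection of parabolic subgroups (it is a genuine set, being a subset of the set of subgroups of $D$), so $I\neq\emptyset$.

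Next, Theorem \ref{thm2_10} applies directly to this family: since $(D,X)$ is a Dyer system of finite type and $\{P_i\mid i\in I\}$ is a non-empty collection of parabolic subgroups, the intersection $P=\bigcap_{i\in I}P_i$ is again a parabolic subgroup of $D$. Because $A\subset P_i$ for every $i\in I$, we have $A\subset P$, so $P$ is a parabolic subgroup containing $A$.

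Finally I would verify minimality. Let $Q$ be any parabolic subgroup of $D$ with $A\subset Q$. By the definition of the family, $Q$ occurs among the $P_i$, and therefore $P=\bigcap_{i\in I}P_i\subset Q$. Thus $P$ is a parabolic subgroup containing $A$ that is contained in every parabolic subgroup containing $A$, which is exactly the assertion that $P$ is the smallest parabolic subgroup containing $A$.

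The argument is purely formal once Theorem \ref{thm2_10} is in hand, so I do not expect any substantial obstacle. The only points meriting a moment's care are that the indexing collection is non-empty (secured by $D$ itself lying in it) and that it is legitimately a set of subgroups, both of which are immediate; all the real work is concentrated in Theorem \ref{thm2_10}.
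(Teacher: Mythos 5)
Your proof is correct and is exactly the intended argument: the paper states this corollary as an immediate consequence of Theorem \ref{thm2_10} without writing out details, and the details are precisely your intersection of the (non-empty, since $D$ itself is parabolic) family of all parabolic subgroups containing $A$.
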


Let $(D,X)$ be a Dyer system and let $A\subset D$ be a subset.
As for Coxeter groups and for Artin groups the smallest parabolic subgroup containing $A$ is denoted by $\Pc(A)$ and is called the \emph{parabolic closure} of $A$.


\section{Dyer systems}\label{sec3}

As mentioned in Section \ref{sec2} the aim of the present section is to prove Theorem \ref{thm2_2}.
We first recall some results on Dyer groups proved in Dyer \cite{Dyer1}.

Let $(D,X)$ be a Dyer system.
Let $R=\{gxg^{-1}\mid g\in D\,,\ x\in X\}$.
For each $\rho\in R$ we take a copy $H_\rho=\{a\,[\rho]\mid a\in\Z_{o(\rho)}\}$ of $\langle\rho\rangle$ whose operation is denoted additively.
We consider the abelian group
\[
M(D,X)=\bigoplus_{\rho\in R}H_\rho\,,
\]
that we endow with a structure of $D$-module, where the action of an element $g\in D$ on an element $m=\sum_{\rho\in R}a_\rho[\rho]$ is defined by
\[
g\cdot m=\sum_{\rho\in R}a_\rho[g\rho g^{-1}]\,.
\]

Let $g\in D$.
Choose a syllabic representative $w=(x_1^{a_1},x_2^{a_2},\dots,x_p^{a_p})$ for $g$.
For each $i\in\{1,\dots,p\}$ we set
\[
\rho_i=x_1^{a_1}\cdots x_{i-1}^{a_{i-1}}x_ix_{i-1}^{-a_{i-1}}\cdots x_1^{-a_1}\in R\,.
\]
Then we set
\[
N(g)=\sum_{i=1}^pa_i[\rho_i]\in M(D,X)\,.
\]
The following theorem gathers together some results proved in Dyer \cite{Dyer1}.

\begin{thm}[Dyer \cite{Dyer1}]\label{thm3_1}
Let $(D,X)$ be a Dyer system.
\begin{itemize}
\item[(1)]
Let $g\in D$.
Then the definition of $N(g)\in M(D,X)$ does not depend on the choice of the syllabic representative for $g$.
\item[(2)]
Let $g\in G$.
Let $N(g)=\sum_{\rho\in R}a_\rho(g)\,[\rho]$.
Then $\lg_{S(X)}(g)=|\{\rho\in R\mid a_\rho(g)\neq0\}|$.
\item[(3)]
Let $g,h\in D$.
Then $N(gh)=N(g)+g\cdot N(h)$.
\end{itemize}
\end{thm}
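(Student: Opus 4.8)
The plan is to realise $N$ as a $1$-cocycle, i.e.\ to produce a genuine homomorphism that has $N$ as its $M(D,X)$-component. Write $M:=M(D,X)$ and let $E$ be the group whose underlying set is $M\times D$ with the twisted product $(m,g)\cdot(m',g')=(m+g\cdot m',\,gg')$ coming from the given $D$-action on $M$. I would build a homomorphism $\phi\colon D\to E$ of the form $\phi(g)=(N(g),g)$. Once $\phi$ is known to be well defined, statement (1) is automatic, since $\pi_M(\phi(g))$, the image of $g$ under the projection $\pi_M\colon E\to M$, depends on no choice of word; and statement (3) is precisely the homomorphism identity $\phi(gh)=\phi(g)\phi(h)$ read on the $M$-coordinate. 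So the entire content of (1) and (3) reduces to constructing $\phi$.

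I define $\phi$ on generators by $\phi(x_v)=([x_v],x_v)$, where $[x_v]=1\cdot[x_v]\in H_{x_v}$, and check the defining relations of the Dyer presentation. For the order relations an immediate induction gives $\phi(x_v)^k=(k\,[x_v],x_v^k)$, hence $\phi(x_v)^{f(v)}=(0,1)$ because $f(v)[x_v]=0$ in $H_{x_v}\cong\Z_{f(v)}$. The substantial point is the braid relations $\overline{[x_u,x_v]}_{m(e)}=\overline{[x_v,x_u]}_{m(e)}$. Since the $D$-coordinates of the two alternating products already agree in $D$, I only need their $M$-coordinates to coincide. For an edge with $m(e)=2$ the relation is commutation and the two $M$-coordinates are $[x_u]+x_u\cdot[x_v]=[x_u]+[x_v]$ and $[x_v]+[x_u]$, which are equal (using $x_ux_vx_u^{-1}=x_v$); this covers the graph-product edges and imposes nothing on $f(u),f(v)$. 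For an edge with $m(e)>2$ the Dyer condition forces $f(u)=f(v)=2$, so $x_u,x_v$ are involutions and all coefficients lie in $\Z_2$. Here the $M$-coordinate of $\overline{[\phi(x_u),\phi(x_v)]}_{m}$ is $\sum_{k=0}^{m-1}[r_k]$, where the $r_k$ are the reflections obtained from the prefixes of $\overline{[x_u,x_v]}_m$; these are exactly the $m$ reflections of the dihedral subgroup $\langle x_u,x_v\rangle$, each once, and the same set arises from $\overline{[x_v,x_u]}_m$, so the two $M$-coordinates agree. This classical dihedral reflection identity is the crux of well-definedness. Finally, expanding $\phi(x_1)^{a_1}\cdots\phi(x_p)^{a_p}$ in $E$ and reading off the $M$-coordinate reproduces $\sum_{i=1}^p a_i[\rho_i]$, so $\pi_M\circ\phi$ is the $N$ of the statement.

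For (2), write $n(g)=|\{\rho\in R\mid a_\rho(g)\neq0\}|$. The key computation is left multiplication by one syllable: from (3), $N(x^ag)=a[x]+x^a\cdot N(g)$, and $x^a$ acts on $M$ by permuting the summands $H_\rho\mapsto H_{x^a\rho x^{-a}}$, so the coefficient of $[x]$ in $N(x^ag)$ is $a+a_x(g)$ while every other coefficient is merely relabelled. Thus $n$ changes by at most $1$ under multiplication by a syllable, and since $N(1)=0$ this already gives $n(g)\le\lg_{S(X)}(g)$ by tracking a reduced word syllable by syllable. For the reverse inequality it suffices to prove that the reflections $\rho_1,\dots,\rho_p$ attached to a \emph{reduced} syllabic word are pairwise distinct: then, as $a_i\not\equiv0\pmod{o(x_i)}$ and $o(\rho_i)=o(x_i)$, no cancellation occurs and $n(g)=p=\lg_{S(X)}(g)$. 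Equivalently, one shows that $\rho_i=\rho_j$ for some $i<j$ forces the word to be shortenable, via the identity $x_i=v\,x_j\,v^{-1}$ with $v=x_i^{a_i}\cdots x_{j-1}^{a_{j-1}}$ that $\rho_i=\rho_j$ produces. I expect this distinctness statement to be the main obstacle: it is the exact analogue of the fact that the reflections of a reduced word in a Coxeter group are distinct, and in the Dyer setting — with syllables of mixed orders — it is precisely the combinatorial content of Dyer's Lemma~2.8, in which the module $M(D,X)$ and the cocycle $N$ control the bookkeeping. Granting it, parts (1)–(3) follow.
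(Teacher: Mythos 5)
The paper offers no proof of this theorem: it is quoted from Dyer \cite{Dyer1}, so there is nothing in the text to compare your argument against line by line. Your reconstruction is, in substance, Dyer's own: realising $N$ as the $M(D,X)$-component of a homomorphism $\phi\colon D\to E$, where $E$ is $M(D,X)\times D$ with the twisted product, is exactly the cocycle formalism of \cite{Dyer1}, and checking $\phi$ on the defining relations is the right (and only) thing to do. Two remarks. First, in the braid-relation check for $m(e)>2$ you justify the equality of the two $M$-coordinates by asserting that the prefix reflections are ``the $m$ reflections of $\langle x_u,x_v\rangle$, each once''; at that stage you cannot assume $\langle x_u,x_v\rangle$ has order exactly $2m$, so argue instead via the index-reversing identity $\overline{[x_u,x_v]}_{2i-1}=\overline{[x_v,x_u]}_{2(m-i)+1}$, which follows from $(x_ux_v)^m=1$ (itself a consequence of the braid relation together with $x_u^2=x_v^2=1$) and therefore holds in whatever quotient of the dihedral group $\langle x_u,x_v\rangle$ turns out to be; this matches the two sums term by term. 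Second, and more importantly, the step you flag as the ``main obstacle'' is not one: from $\rho_i=\rho_j$ with $i<j$ and the identity $x_i=vx_jv^{-1}$, $v=x_i^{a_i}\cdots x_{j-1}^{a_{j-1}}$, you get $vx_j^{a_j}=x_i^{a_j}v$ (note $o(x_i)=o(x_j)$, so $x_i^{a_j}$ is a legitimate syllable), whence $g=x_1^{a_1}\cdots x_{i-1}^{a_{i-1}}x_i^{a_i+a_j}x_{i+1}^{a_{i+1}}\cdots x_{j-1}^{a_{j-1}}x_{j+1}^{a_{j+1}}\cdots x_\ell^{a_\ell}$, a syllabic expression of length $\ell-1$ or $\ell-2$, contradicting reducedness. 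Writing out that one line closes your argument; the genuinely delicate content of Dyer's Lemma~2.8 is the well-definedness of $N$, which you have already handled via $\phi$.
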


Now using Theorem \ref{thm3_1} we prove a version for Dyer groups of the so-called Exchange Lemma.

\begin{lem}\label{lem3_2}
Let $(D,X)$ be a Dyer system.
Let $g\in D$ and let $w=(x_1^{a_1},\dots,x_\ell^{a_\ell})$ be a reduced syllabic expression for $g$.
For all $i\in\{1,\dots,\ell\}$ we set
\[
\rho_i=x_1^{a_1}\cdots x_{i-1}^{a_{i-1}}x_ix_{i-1}^{-a_{i-1}}\cdots x_1^{-a_1}\in R\,.
\]
Let $s_0=x_0^{a_0}\in S(X)$.
If $\lg_{S(X)}(s_0g)\le\ell$, then there exists $i\in\{1,\dots,\ell\}$ such that $x_0=\rho_i$.
In that case, if $a_0+a_i= 0$, then $(x_1^{a_1},\dots,x_{i-1}^{a_{i-1}},x_{i+1}^{a_{i+1}},\dots,x_\ell^{a_\ell})$ is a reduced syllabic expression for $s_0g$, and if $a_0+a_i\neq 0$, then $(x_1^{a_1},\dots,x_{i-1}^{a_{i-1}},x_i^{a_0+a_i},x_{i+1}^{a_{i+1}},\dots,x_\ell^{a_\ell})$ is a reduced syllabic expression for $s_0g$.
\end{lem}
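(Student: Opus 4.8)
The plan is to derive everything from the cocycle $N$ of Theorem~\ref{thm3_1}, which converts syllabic length into the cardinality of a support in the $D$-module $M(D,X)$. Since $w$ is reduced, Theorem~\ref{thm3_1}(2) gives $|\{\rho\mid a_\rho(g)\neq0\}|=\ell$, and because $N(g)=\sum_{i=1}^\ell a_i[\rho_i]$ is written as a sum of $\ell$ terms with each $a_i\neq0$, the reflections $\rho_1,\dots,\rho_\ell$ must be pairwise distinct. This distinctness is the fact that will control every support count below.

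Next I would apply the cocycle identity Theorem~\ref{thm3_1}(3) to $s_0g$. Since $N(s_0)=a_0[x_0]$, this yields
\[
N(s_0g)=a_0[x_0]+s_0\cdot N(g)=a_0[x_0]+\sum_{i=1}^\ell a_i[s_0\rho_is_0^{-1}]\,.
\]
As conjugation by $s_0$ is a bijection of $R$, the reflections $s_0\rho_1s_0^{-1},\dots,s_0\rho_\ell s_0^{-1}$ are again pairwise distinct, so a priori $N(s_0g)$ is supported on $\ell+1$ reflections. The key observation is that, because $s_0=x_0^{a_0}$ lies in $\langle x_0\rangle$ and hence commutes with $x_0$, one has $x_0=s_0\rho_is_0^{-1}$ if and only if $x_0=\rho_i$. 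Now the hypothesis $\lg_{S(X)}(s_0g)\le\ell$ forces, via Theorem~\ref{thm3_1}(2), the support of $N(s_0g)$ to have cardinality at most $\ell$; since the $s_0\rho_is_0^{-1}$ are distinct and $x_0$ can coincide with at most one of them, this is possible only if $x_0=s_0\rho_is_0^{-1}$ for some (necessarily unique) $i$, that is $x_0=\rho_i$.

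For the last statement I would substitute $x_0=\rho_i$ directly. Writing $x_0^{a_0}=x_1^{a_1}\cdots x_{i-1}^{a_{i-1}}x_i^{a_0}x_{i-1}^{-a_{i-1}}\cdots x_1^{-a_1}$ and multiplying on the right by $w$, the inner block $x_{i-1}^{-a_{i-1}}\cdots x_1^{-a_1}\,x_1^{a_1}\cdots x_{i-1}^{a_{i-1}}$ telescopes to $1$, leaving
\[
s_0g=x_1^{a_1}\cdots x_{i-1}^{a_{i-1}}x_i^{a_0+a_i}x_{i+1}^{a_{i+1}}\cdots x_\ell^{a_\ell}\,.
\]
If $a_0+a_i=0$ this collapses to the word of length $\ell-1$ in the statement, and the surviving terms of $N(s_0g)$ give $\lg_{S(X)}(s_0g)=\ell-1$, so the word is reduced; if $a_0+a_i\neq0$ the displayed word has length $\ell$, the $x_0$-term survives so $N(s_0g)$ has support of cardinality exactly $\ell$, and again the word is reduced. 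The only genuinely delicate point is the support bookkeeping in the second paragraph: one must know that the $\rho_i$ are distinct and that $x_0$ meets at most one conjugate $s_0\rho_is_0^{-1}$, so that the length hypothesis leaves no alternative to a single coincidence. Everything else is the telescoping computation together with the translation of length into support size provided by Theorem~\ref{thm3_1}.
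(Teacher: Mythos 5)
Your proposal is correct and follows essentially the same route as the paper: establish that the $\rho_i$ are pairwise distinct via Theorem \ref{thm3_1}(2), apply the cocycle identity of Theorem \ref{thm3_1}(3) to get $N(s_0g)=a_0[x_0]+\sum_i a_i[s_0\rho_is_0^{-1}]$, force the coincidence $x_0=\rho_i$ from the length hypothesis, and then verify reducedness of the resulting word by counting the support of $N(s_0g)$. The only cosmetic difference is that you make explicit the telescoping computation and the remark that $s_0$ commutes with $x_0$, both of which the paper leaves implicit.
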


\begin{proof}
We have $N(g)=\sum_{i=1}^\ell a_i[\rho_i]$ and, by Theorem \ref{thm3_1}\,(2), $\rho_i\neq\rho_j$ for $i\neq j$.
By Theorem \ref{thm3_1}\,(3),
\[
N(s_0g)=a_0[x_0]+\sum_{i=1}^\ell a_i[s_0\rho_is_0^{-1}]\,.
\]
Since the $s_0\rho_is_0^{-1}$ are pairwise distinct for $i\in\{1,\dots,\ell\}$ and $\lg_{S(X)}(s_0g)\le\ell$, by Theorem \ref{thm3_1}\,(2) there exists $i\in\{1,\dots,\ell\}$ such that $s_0\rho_is_0^{-1}=x_0$, hence $\rho_i=s_0^{-1}x_0s_0=x_0$.

Suppose $a_0+a_i=0$.
From the equality $x_0=\rho_i$ follows that $s_0g=x_1^{a_1}\cdots x_{i-1}^{a_{i-1}}x_{i+1}^{a_{i+1}}\cdots x_\ell^{a_\ell}$, hence $w'=(x_1^{a_1},\dots,x_{i-1}^{a_{i-1}},x_{i+1}^{a_{i+1}},\dots,x_\ell^{a_\ell})$ is a syllabic expression for $s_0g$.
Furthermore,
\[
N(s_0g)=\sum_{j=1}^{i-1}a_j[s_0\rho_js_0^{-1}]+\sum_{j=i+1}^\ell a_j[s_0\rho_js_0^{- 1}]\,,
\]
and the $[s_0\rho_js_0^{-1}]$ are pairwise distinct for $j\in\{1,\dots,\ell\}$, hence, by Theorem \ref{thm3_1}\,(2), $\lg_{S(X)}(s_0g)=\ell-1$.
So, $w'$ is a reduced syllabic word.

Suppose $a_0+a_i\neq 0$.
From the equality $x_0=\rho_i$ follows that $s_0g=x_1^{a_1}\cdots x_{i-1}^{a_{i-1}}x_i^{a_0+a_i}x_{i+1} ^{a_{i+1}}\cdots x_\ell^{a_\ell}$, hence $w'=(x_1^{a_1},\dots,x_{i-1}^{a_{i-1}},x_i^{a_0+a_i},x_{i+1}^{a_{i+1}},\dots,x_\ell^{a_\ell})$ is a syllabic expression for $s_0g$.
Furthermore, 
\[
N(s_0g)=\sum_{j=1}^{i-1}a_j[s_0\rho_js_0^{-1}]+(a_0+a_i)[s_0\rho_is_0^{-1}]+\sum_{j=i+1}^\ell a_j
[s_0\rho_js_0^{-1}]\,,
\]
and the $[s_0\rho_js_0^{-1}]$ are pairwise distinct for $j\in\{1,\dots,\ell\}$, hence, by Theorem \ref{thm3_1}\,(2), $\lg_{S(X)}(s_0g)=\ell$.
So, $w'$ is a reduced syllabic word.
\end{proof}

\begin{proof}[Proof of Theorem \ref{thm2_2}]
Let $(D,X)$ be a Dyer system.
We start by showing that, if $w$ and $w'$ are two reduced syllabic words such that $\overline{w} = \overline{w'}$, then we can go from $w$ to $w'$ through a finite sequence of elementary M-operations of type II. 
We denote by $\ell$ the common length of $w$ and $w'$, and we argue by induction on $\ell$.
The cases $\ell=0$ and $\ell=1$ are trivial, hence we can assume that $\ell\ge2$ and that the induction hypothesis holds.

We set $w=(s_1,\dots,s_\ell)$ and $w'=(t_1,\dots,t_\ell)$.
If $s_1=t_1$, then $w_1=(s_2,\dots,s_\ell)$ and $w_1'=(t_2,\dots,t_\ell)$ are two reduced syllabic words such that $\overline{w_1}=\overline{w_1'}$.
Thus, by the induction hypothesis, we can go from $w_1$ to $w_1'$ through a finite sequence of elementary M-operations of type II.
It follows that we can go from $w$ to $w'$ through a finite sequence of elementary M-operations of type II.
So, we can assume that $s_1\neq t_1$.

We prove the following claim by induction on $q\ge1$.

{\it Claim.}
Let $q\ge1$.
If $\overline{[s_1,t_1]}_p\neq\overline{[t_1,s_1]}_p$ for all $p<q$, then $\lg_{S(X)}(\overline{[s_1,t_1]}_q)=\lg_{S(X)}(\overline{[t_1,s_1]}_q)=q$ and there exist reduced syllabic words $u_q$ and $u_q'$ of length $\ell-q$ such that $\overline{w}=\overline{w'}=\overline{[s_1,t_1]_q\cdot u_q}=\overline{[t_1,s_1]_q\cdot u_q'}$.

{\it Proof of the claim.}
The case $q=1$ is obtained directly by setting $u_q=(s_2,\dots,s_\ell)$ and $u_q'=(t_2,\dots,t_\ell)$.
So we can assume that $q\ge2$ and that the induction hypothesis on $q$ holds.
By the induction hypothesis $\lg_{S(X)}(\overline{[s_1,t_1]}_{q-1})=\lg_{S(X)}(\overline{[t_1,s_1]}_{q-1}) =q-1$ and there exist reduced syllabic words $u_{q-1}$ and $u_{q-1}'$ of length $\ell-q+1$ such that $\overline{w}=\overline{w'}=\overline{[s_1,t_1]_{q-1}\cdot u_{q-1}}=\overline{[t_1,s_1]_{q-1}\cdot u_{q-1}'}$.
We set $[t_1,s_1]_{q-1}\cdot u_{q-1}'=(r_1,\dots,r_\ell)$.
We have $r_i=t_1$ if $i$ is odd and $i\le q-1$, $r_i=s_1$ if $i$ is even and $i\le q-1$, and $u_{q-1}'=(r_q,r_{q+1},\dots,r_\ell)$.
Since $\lg_{S(X)}(s_1^{-1}\overline{w})<\ell$ and $(r_1,\dots,r_\ell)$ is a reduced syllabic expression for $\overline{w}$, by Lemma \ref{lem3_2} there exists $i\in\{1,\dots,\ell\}$ such that $(r_1,\dots,\widehat{r_i},\dots,r_\ell)$ is a reduced syllabic expression for $s_1^{-1}\overline{w}$.
Thus $(s_1,r_1,\dots,\widehat{r_i},\dots,r_\ell)$ is a reduced syllabic expression for $\overline{w}$.
If we had $i=1$ and $q\ge 3$, then we would have $r_2=s_1$, thus $(s_1,\widehat{r_1},r_2,\dots,r_\ell)=(s_1,s_1,r_3,\dots,r_\ell)$ would not be a reduced syllabic word: contradiction.
If we had $2\le i\le q-2$, then we would have $r_{i-1}=r_{i+1}$, hence $(r_1,\dots,\widehat{r_i},\dots,r_\ell)$ would not be reduced: contradiction.
If we had $i=q-1$ ($q=2$ and $i=1$ included), then $(s_1,r_1,\dots,\widehat{r_i},\dots,r_\ell)=[ s_1,t_1]_{q-1}\cdot u_{q-1}'$ would be a reduced syllabic expression for $\overline{w}=\overline{[t_1,s_1]_{q-1}\cdot u_ {q-1}'}$, hence we would have $\overline{[s_1,t_1]}_{q-1}=\overline{[t_1,s_1]}_{q-1}$, which would contradict the initial hypothesis.
So $i\ge q$.
Then $\lg_{S(X)}(\overline{[s_1,t_1]}_q)=q$, $u_q=(r_q,\dots,\widehat{r_i},\dots,r_\ell)$ is a reduced syllabic word of length $\ell-q$, and $\overline{w}=\overline{[s_1,t_1]_q\cdot u_q}$.
We prove in the same way that $\lg_{S(X)}(\overline{[t_1,s_1]}_q)=q$ and that there exists a reduced syllabic word $u_q'$ of length $\ell-q$ such that $\overline{w}=\overline{[t_1,s_1]_q\cdot u_q'}$.
This concludes the proof of the claim.

In the above claim $q$ is necessarily bounded by $q\le\ell$, hence there exist $q\ge 2$ and a reduced syllabic word $u$ of length $\ell-q$ such that $\overline{[s_1,t_1]}_q=\overline{[t_1,s_1]}_q$, $\lg_{S(X)}(\overline{[s_1,t_1]}_q)=q$ and $\overline{w}=\overline{[s_1,t_1]_q\cdot u}=\overline{[t_1,s_1]_q\cdot u}$.
As in the case $s_1=t_1$ treated at the beginning of the proof, we can go from $w$ to $[s_1,t_1]_q\cdot u$ through a finite sequence of elementary M-operations of type II, and we can go from $[t_1,s_1]_q\cdot u$ to $w'$ through a finite sequence of elementary M-operations of type II.
Obviously we can also go from $[s_1,t_1]_q \cdot u$ to $[t_1,s_1]_q \cdot u$ through a single elementary M-operation of type II.
So, we can go from $w$ to $w'$ through a finite sequence of elementary M-operations of type II.

It remains to show that, if a syllabic word $w=(x_1^{a_1},\dots,x_\ell^{a_\ell})$ is M-reduced, then $w$ is reduced.
We argue by induction on the length $\ell$ of the syllabic word.
The case $\ell=1$ is trivial, hence we can assume that $\ell\ge 2$ and that the induction hypothesis holds.

Suppose $w$ is not reduced.
By the induction hypothesis $w_1=(x_2^{a_2},\dots,x_\ell^{a_\ell})$ is reduced.
So, by Lemma \ref{lem3_2}, there exists $i\in\{2,\dots,\ell\}$ such that
\[
x_1=x_2^{a_2}\cdots x_{i-1}^{a_{i-1}}x_ix_{i-1}^{-a_{i-1}}\cdots x_2^{-a_2}\,.
\]
Let $w_2=(x_1^{a_1},x_2^{a_2},\dots,x_{i-1}^{a_{i-1}})$ and $w_2'=(x_2^{a_2},\dots,x_{i-1}^{a_{i-1}},x_i^{a_1})$.
By the above equality we have $\overline{w_2}=\overline{w_2'}$.
Moreover $w_2$ is M-reduced since $w$ is M-reduced, hence, by the induction hypothesis, $w_2$ is reduced.
Then $w_2'$ is also reduced since it has the same length as $w_2$.
Thus, by what is proved above, we can go from $w_2$ to $w_2'$ through a finite sequence of elementary M-operations of type II.
Set $w'=(x_2^{a_2},\dots,x_{i-1}^{a_{i-1}},x_{i+1}^{a_{i+1}},\dots,x_\ell^{a_\ell})$ if $a_1+a_i=0$ and $w'=(x_2^{a_2},\dots,x_{i-1}^{a_{i-1}},x_i^{a_1+a_i},x_{i+1}^{a_{i+1}},\dots,x_\ell^{a_\ell})$ if $a_1+a_i\neq0$.
Then we can go from $w$ to $w'$ through a finite sequence of elementary M-operations.
Since $\lg_{S(X)}(w')<\lg_{S(X)}(w)$, this contradicts the hypothesis that $w$ is M-reduced.
\end{proof}


\section{quasi-Dyer systems}\label{sec4}

Recall that the aim of this section is to prove Theorem \ref{thm2_3}.

\begin{lem}\label{lem4_1}
Let $(G,X)$ be a strongly marked group with Property $\DD$.
Let $x,y\in X$, $x\neq y$, $a\in\Z_{o(x)}\setminus\{0\}$ and $b\in\Z_{o(y)} \setminus\{0\}$.
If $x^ay^b=y^bx^a$, then $xy=yx$.
\end{lem}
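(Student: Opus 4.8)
The plan is to prove the statement by peeling off the exponents one at a time. I will isolate the following core assertion and apply it twice: if $x\in X$, if $t\in S(X)$ is a syllable with $t\notin\langle x\rangle$, and if $x^ct=tx^c$ for some $c\in\Z_{o(x)}\setminus\{0\}$, then already $xt=tx$. Granting this, I first apply it with $t=y^b$ (note $y^b\notin\langle x\rangle$ because $\langle x\rangle\cap\langle y\rangle=\{1\}$ and $y^b\neq1$) to upgrade $x^ay^b=y^bx^a$ into $xy^b=y^bx$; then I apply it again, with the roles of $x$ and $y$ exchanged, to the syllable $t=x$ (which lies outside $\langle y\rangle$) to upgrade $y^bx=xy^b$ into $yx=xy$, which is the desired conclusion.

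To prove the core assertion I argue by contradiction using conjugation by $x$. Consider $\phi(g)=xgx^{-1}$ and the set $H=\{c\in\Z_{o(x)}\mid \phi^c(t)=t\}$; this is a subgroup of $\Z_{o(x)}$ containing the given nonzero exponent, so it is generated by its least positive element $e$, and $H=e\,\Z_{o(x)}$. The goal is $e=1$, so suppose $e\ge2$. A short preliminary observation disposes of small orders: if $o(x)=2$ then $\Z_{o(x)}\setminus\{0\}=\{1\}$ forces the hypothesis to read $xt=tx$ already, so $e=1$; hence $e\ge2$ forces $o(x)\ge3$, and in particular $x\neq x^{-1}$. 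Also $e<o(x)$ since $H$ is nontrivial.

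The heart of the argument is to exhibit two distinct reduced syllabic words for the single element $g=xtx^{-1}=\phi(t)$ that cannot be linked by M-operations of type II, contradicting part (b) of Property $\DD$. The first word is $w=(x,t,x^{-1})$. I claim it is \emph{rigid}: no elementary M-operation applies to it. Indeed, a type I move would require $xt$ or $tx^{-1}$ to lie in $S(X)\cup\{1\}$, which is impossible since $t=y^b$ with $y\neq x$ and $(G,X)$ is strongly marked; a type II move of length $2$ would require $x$ (or $x^{-1}$) to commute with $t$, i.e. $1\in H$, contrary to $e\ge2$; and a type II move of length $3$ would require $w$ to be a palindrome $(s,t',s)$, i.e. $x=x^{-1}$, which $o(x)\ge3$ forbids. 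Being rigid, $w$ is in particular M-reduced, hence reduced by part (a) of Property $\DD$, so $\lg_{S(X)}(g)=3$. The second word is $w'=(x^{-(e-1)},t,x^{e-1})$: since $\phi^e(t)=t$ we have $\phi^{-(e-1)}(t)=\phi(t)=g$, so $\overline{w'}=g$, and $w'$ has length $3=\lg_{S(X)}(g)$, hence $w'$ is reduced as well. Finally $w'\neq w$ because their first syllables $x^{-(e-1)}$ and $x$ differ (equality would force $e\equiv0\bmod o(x)$, impossible as $0<e<o(x)$). Now $w$ and $w'$ are two distinct reduced words representing $g$, yet $w$ admits no type II move at all, so no finite sequence of type II moves can carry $w$ to $w'$; this contradicts Property $\DD$(b), and the contradiction forces $e=1$.

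The step I expect to require the most care is verifying the rigidity of $w=(x,t,x^{-1})$, and specifically ruling out the length-$3$ type II move: this is exactly where the reduction to $o(x)\ge3$ (via the $o(x)=2$ shortcut) is essential, since for $o(x)=2$ the word $(x,t,x)$ would be a genuine palindrome and the argument would break down — but in that case the hypothesis already gives the conclusion. A secondary point to handle cleanly is the bookkeeping showing $0<e<o(x)$ and $e-1\not\equiv0$, which guarantees that all exponents appearing in $w$ and $w'$ are legitimate nonzero elements of $\Z_{o(x)}$ and that $w'$ is genuinely a different word from $w$.
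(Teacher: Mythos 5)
Your proof is correct and follows essentially the same route as the paper's: both arguments apply Property $\DD$ to a pair of length-three syllabic words of the shape (power of $x$, syllable of the other generator, power of $x$) representing the same element, rule out type I operations via strong markedness and the length-three flip via an inequality of exponents, and then symmetrize in $x$ and $y$. Your packaging through the stabilizer subgroup $H$ and the rigid word $(x,t,x^{-1})$ is a cosmetic variant of the paper's direct comparison of $(x^{a-1},y^b,x^{-1})$ with $(x^{-1},y^b,x^{a-1})$.
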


\begin{proof}
Suppose $a\neq 1$.
Let $w=(x^{a-1},y^b,x^{-1})$ and $w'=(x^{-1},y^b,x^{a-1})$.
We have $\overline{w}=x^{a-1}y^bx^{-1}=x^{-1}y^{b}x^{a-1}=\overline{w'}$.
Thus, since $(G,X)$ has Property $\DD$, either we can reduce the length of both, $w$ and $w'$, by applying elementary M-operations, if $\lg_{S(X)}(\overline{w})=\lg_{S(X)}(\overline{w'})<3$, or we can go from $w$ to $w'$ through a finite sequence of elementary M-operations of type II, if $\lg_{S(X)}(\overline{w})=\lg_{S(X)}(\overline{w'})=3$.
In both cases we must be able to apply an elementary M-operation to $w$.
We cannot apply any elementary M-operation of type I to $w=(x^{a-1},y^b,x^{-1})$ since $x\neq y$ and $(G,X)$ is strongly marked.
Since $a-1\neq-1$, the only elementary M-operations that could be applied to $w$ are
\[
(x^{a-1},y^b,x^{-1})\to(y^b,x^{a-1},x^{-1})\quad\text{or}\quad(x^ {a-1},y^b,x^{-1})\to(x^{a-1},x^{-1},y^b)\,.
\]
So, either $x^{a-1}y^b=y^bx^{a-1}$ or $x^{-1}y^b=y^bx^{-1}$.
Any of these two equalities combined with the equality $x^ay^b=y^bx^a$ implies $xy^b=y^bx$.

If $b\neq1$, then we use the equality $y^{b-1}xy^{-1}=y^{-1}xy^{b-1}$ to show in the same way that $xy=yx$.
\end{proof}

\begin{lem}\label{lem4_2}
Let $(G,X)$ be a strongly marked group with Property $\DD$.
Let $x,y\in X$, $x\neq y$, $a\in\Z_{o(x)}\setminus\{0\}$, $b\in\Z_{o(y)}\setminus\{0\}$, and $m\ge3$.
Assume that $xy\neq yx$, $\overline{[x^a,y^b]}_m=\overline{[y^b,x^a]}_m$ and $\lg_{S(X)}(\overline{[x^a,y^b]}_m)=m$.
Then $o(x)$ and $o(y)$ are both finite and even, $a=o(x)/2$, and $b=o(y)/2$.
Moreover, $m$ is unique in the sense that, if $\overline{[x^a,y^b]}_n=\overline{[y^b,x^a]}_n$ and $\lg_{S (X)}(\overline{[x^a,y^b]}_n)=n$, then $n=m$.
\end{lem}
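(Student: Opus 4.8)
Set $s=x^a$ and $t=y^b$; these are distinct syllables, since $s=t$ would force $s\in\langle x\rangle\cap\langle y\rangle=\{1\}$, contrary to the strongly marked hypothesis. The plan is to first establish a minimality property of $m$, then to prove $s^2=t^2=1$ by exhibiting a word with two irreconcilable reduced forms, and finally to read off the arithmetic statements and the uniqueness of $m$. \emph{Minimality:} I claim $\overline{[s,t]}_k\neq\overline{[t,s]}_k$ for all $k$ with $2\le k<m$. If some such $k$ violated this, then either $\lg_{S(X)}(\overline{[s,t]}_k)<k$, whence subadditivity of syllabic length applied to $\overline{[s,t]}_m=\overline{[s,t]}_k\cdot r$ (with $r$ a product of the remaining $m-k$ syllables) gives $\lg_{S(X)}(\overline{[s,t]}_m)\le(k-1)+(m-k)<m$; or $\lg_{S(X)}(\overline{[s,t]}_k)=k$, in which case applying a type II operation to the first $k$ syllables of $[s,t]_m$ turns $[s,t]_k$ into $[t,s]_k$ and, by the parity of $k$, creates two equal adjacent syllables that a type I operation merges, again forcing $\lg_{S(X)}(\overline{[s,t]}_m)<m$. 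Both contradict $\lg_{S(X)}(\overline{[s,t]}_m)=m$ (the case $k=2$ being excluded directly, as $st=ts$ would give $xy=yx$ by Lemma \ref{lem4_1}). Uniqueness of $m$ follows at once: an $n$ with $\overline{[s,t]}_n=\overline{[t,s]}_n$ and $\lg_{S(X)}(\overline{[s,t]}_n)=n$ cannot satisfy $n<m$ (minimality for $m$) nor $n>m$ (minimality for $n$, which $k=m$ would violate), so $n=m$.

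For the key step, by the symmetry of the hypotheses in $s$ and $t$ it suffices to prove $s^2=1$, so I assume $s^2\neq1$ and seek a contradiction. Consider the length-$(m+1)$ word $W=[s,t]_{m+1}$. The relation yields two reductions of $\overline{W}$: a \emph{left} reduction, applying a type II operation to the last $m$ syllables of $W$ (legitimate, as they form $[t,s]_m$ with $\lg_{S(X)}=m$) followed by a type I merge of the resulting initial $ss$, producing $W_A=(s^2)\cdot[t,s]_{m-1}$; and a \emph{right} reduction, applying type II to the first $m$ syllables and then a type I merge at the end, producing $W_B=[t,s]_{m-1}\cdot(s^2)$ or $[t,s]_{m-1}\cdot(t^2)$ according to the parity of $m$. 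I then verify that no elementary M-operation applies to $W_A$: type I is blocked because every adjacent pair is $(s^2,t)$ or $(t,s)$/$(s,t)$, whose products lie outside $S(X)\cup\{1\}$ by the strongly marked condition; type II inside the alternating tail $[t,s]_{m-1}$ is blocked by Minimality, while a type II on the initial pair $(s^2,t)$ would require $s^2t=ts^2$, whence $xy=yx$ by Lemma \ref{lem4_1}. Hence $W_A$ is M-reduced, so reduced by Property $\DD$(a); thus $\lg_{S(X)}(\overline{W})=m$, and $W_A$ is the only reduced word reachable from itself by type II operations.

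Now the contradiction. If parity forces the right reduction to merge a $t^2$ and in fact $t^2=1$, then $W_B$ has length $m-1$, giving $\lg_{S(X)}(\overline{W})\le m-1$, contrary to $\lg_{S(X)}(\overline{W})=m$. Otherwise $W_B$ has length $m$, hence is reduced, and $W_B\neq W_A$ (they begin respectively with $t$ and with $s^2$, and $s^2\neq t$ since $s^2=t$ would put $s^2\in\langle x\rangle\cap\langle y\rangle=\{1\}$). But then Property $\DD$(b) requires a sequence of type II operations carrying $W_A$ to $W_B$, whose first step would be a type II operation applicable to $W_A$ — and none exists. This contradiction proves $s^2=1$, and by symmetry $t^2=1$.

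Finally, from $s^2=x^{2a}=1$ with $a\neq0$ one deduces $o(x)<\infty$ (an infinite-order $x$ would force $2a=0$), and then $o(x)\mid2a$ together with $0<a<o(x)$ forces $2a=o(x)$; so $o(x)$ is even and $a=o(x)/2$, and symmetrically $o(y)$ is even with $b=o(y)/2$. Combined with the uniqueness of $m$, this gives the lemma. I expect the genuine obstacle to be this key step: the braid-type relation alone does \emph{not} force $s^2=1$ (in a braid group $\langle s,t\mid sts=tst\rangle$ one has $s^2\neq1$), so the proof must essentially use both Property $\DD$(b) and the strongly marked hypothesis — precisely what the incompatibility between $W_A$ and $W_B$ encodes.
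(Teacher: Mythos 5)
Your proof is correct and follows essentially the same strategy as the paper's: use the minimality and uniqueness of $m$ together with the strongly marked hypothesis to block every elementary M-operation on a carefully chosen test word, so that Property $\DD$ forces $x^{2a}=1$ and $y^{2b}=1$, from which the arithmetic conclusions follow. The only real difference is the choice of test word --- the paper works with $(x^{-a})\cdot[y^b,x^a]_{m-1}$ and its rotation and extracts $-a=a$ directly, whereas you work with $[x^a,y^b]_{m+1}$ and its two reductions $W_A$, $W_B$ --- but the blocking mechanism, the parity case analysis, and the uniqueness argument are the same.
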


\begin{proof}
We assume that $a\in\Z_{o(x)}\setminus\{0\}$ and $b\in\Z_{o(y)}\setminus\{0\}$ are fixed and we choose $m$ minimal so that $\overline{[x^a,y^b]}_m=\overline{[y^b,x^a]}_m$ and $\lg_{S(X)}(\overline{[x^a,y^b]}_m)=m$.
We have $m\ge 3$ since $x$ and $y$ do not commute (see Lemma \ref{lem4_1}).
We start by showing that $o(x)$ and $o(y)$ are both finite and even, that $a=o(x)/2$, and that $b=o(y)/2$.
Suppose $m$ is even.
Let 
\[
w=(x^{-a})\cdot[y^{b},x^{a}]_{m-1}\quad\text{and}\quad w'=[y^{b},x^{a}]_{m-1}\cdot(x^{-a})\,.
\]
We have $\overline{w}=\overline{w'}$.
Since $(G,X)$ has Property $\DD$, either we can reduce the length of both, $w$ and $w'$, by applying elementary M-operations, if $\lg_{S(X)}(\overline{w})=\lg_{S(X)}(\overline{w'})<m$, or we can go from $w$ to $w'$ through a finite sequence of elementary M-operations of type II, if $\lg_{S(X)}(\overline{w})=\lg_{S(X)}(\overline{w'})=m$.
In both cases we must be able to apply an elementary M-operation to $w$.
We cannot apply any elementary M-operation of type I to $w$ because $x\neq y$.
If we had $-a\neq a$, then, by the minimality of $m$, we could not apply any elementary M-operation of type II to $w$ either.
So, to be able to apply an elementary M-operation to $w$ we must have $-a=a$.
This is possible only if $o(x)$ is finite and even and $a=o(x)/2$.
We show in the same way that $o(y)$ is finite and even and that $b=o(y)/2$.
The case where $m$ is odd is treated in the same way with the words $w=(x^{-a})\cdot[y^b,x^a]_{m-1}$ and $w'=[y^b,x^a]_{m-1}\cdot(y^{-b})$.

It remains to show that $m$ is unique.
Suppose there exists another integer $n\ge3$ different from $m$ such that $\overline{[x^a,y^b]}_n=\overline{[y^b,x^a]}_n$ and $\lg_{ S(X)}(\overline{[x^a,y^b]}_n)=n$.
Since $m$ was chosen minimal, we have $n>m$.
Assume $m$ is even.
Then we have the following sequence of elementary M-operations:
\[
[x^a,y^b]_n\to[y^b,x^a]_m\cdot[x^{a},y^b]_{n-m}\to[y^b,x^a] _{m-1}\cdot[y^b, x^a]_{n-m-1}\,,
\]
hence $[x^a,y^b]_n$ is not M-reduced.
This contradicts the hypothesis $\lg_{ S(X)}(\overline{[x^a,y^b]}_n)=n$.
The case where $m$ is odd is treated in the same way.
\end{proof}

\begin{lem}\label{lem4_3}
Let $(G,X)$ be a strongly marked group with Property $\DD$.
Let $x,y\in X$, $x\neq y$, $a\in\Z_{o(x)}\setminus\{0\}$, $b\in\Z_{o(y)}\setminus\{0\}$, and $m\ge 3$ even.
Assume $xy\neq yx$, $\overline{[x^a,y^b]}_m=\overline{[y^b,x^a]}_m$ and $\lg_{S(X)}(\overline{[x^a,y^b]}_m)=m$.
Then $o(x)=o(y)=2$ and $a=b=1$.
\end{lem}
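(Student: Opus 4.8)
The plan is to argue by contradiction, building on Lemma~\ref{lem4_2}. That lemma already tells us that $o(x)$ and $o(y)$ are finite and even with $a=o(x)/2$ and $b=o(y)/2$, so $x^a$ and $y^b$ are involutions; it remains only to upgrade this to $o(x)=o(y)=2$. By the symmetry of the hypotheses in $(x,a)$ and $(y,b)$ it suffices to prove $o(x)=2$, that is $a=1$. So I would assume $a\ge 2$ and seek a contradiction. Write $\Delta=\overline{[x^a,y^b]}_m=\overline{[y^b,x^a]}_m$ and consider the conjugate $g=x\Delta x^{-1}$.

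Then I would produce two syllabic expressions for $g$ by conjugating the two reduced forms of $\Delta$, using crucially that $m$ is even. Since $m$ is even, $\overline{[x^a,y^b]}_m$ begins with $x^a$ and ends with $y^b$, whereas $\overline{[y^b,x^a]}_m$ begins with $y^b$ and ends with $x^a$. Conjugating the first form by $x$, the leading $x$ merges with $x^a$ into $x^{a+1}$ while the trailing $x^{-1}$ is left isolated, giving
\[
w_A=(x^{a+1})\cdot[y^b,x^a]_{m-1}\cdot(x^{-1})\,.
\]
Conjugating the second form, the leading $x$ stays isolated while the trailing $x^a$ merges with $x^{-1}$ into $x^{a-1}$, giving
\[
w_B=(x)\cdot[y^b,x^a]_{m-1}\cdot(x^{a-1})\,.
\]
Both are genuine syllabic words of length $m+1$ with $\overline{w_A}=\overline{w_B}=g$ (here $a\ge 2$ guarantees that $x^{a+1}$, $x^{a-1}$ and $x$ are nonzero syllables). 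The decisive point is that $w_B$ contains the syllable $x=x^1$, while for $a\ge 2$ none of the $x$-syllables of $w_A$, namely $x^{a+1}$, $x^a$ and $x^{-1}=x^{2a-1}$, is equal to $x^1$ modulo $2a$.

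Finally I would invoke Property~$\DD$. First I claim $w_A$ is M-reduced. No elementary M-operation of type I applies, since every pair of adjacent syllables involves the two distinct generators $x$ and $y$ and $(G,X)$ is strongly marked. No type II operation applies either: because the end syllables $x^{a+1}$ and $x^{-1}$ differ from the interior syllable $x^a$, the only alternating blocks are the length-two blocks mixing a power of $x$ with a power of $y$ --- excluded by Lemma~\ref{lem4_1} since $xy\neq yx$ --- and the sub-blocks $[y^b,x^a]_k$ with $k\le m-1<m$, which cannot satisfy the hypotheses of a type II operation by the uniqueness of $m$ established in Lemma~\ref{lem4_2}. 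Hence, by Property~$\DD$\,(a), $w_A$ is reduced, so $\lg_{S(X)}(g)=m+1$ and $w_B$ is reduced as well. By Property~$\DD$\,(b) one can pass from $w_A$ to $w_B$ through type II operations; but a type II operation only permutes the two syllables of an alternating block and never creates a syllable value that was not already present, so the set of syllable values occurring is invariant along any such sequence. This contradicts the fact that $x^1$ occurs in $w_B$ but not in $w_A$. Therefore $a=1$, and symmetrically $b=1$, so $o(x)=o(y)=2$. The main obstacle is the verification that $w_A$ is M-reduced, which is precisely where the evenness of $m$ and Lemmas~\ref{lem4_1} and~\ref{lem4_2} are indispensable.
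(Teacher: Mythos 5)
Your proof is correct and follows essentially the same strategy as the paper's: the paper takes the two words $(x^{a-c})\cdot[y^b,x^a]_{m-1}\cdot(x^{-c})$ and $(x^{-c})\cdot[y^b,x^a]_{m-1}\cdot(x^{a-c})$ for any $c\neq 0,a$ (your $w_A$, $w_B$ are the variant obtained by conjugating by $x$), and rules out all elementary M-operations exactly as you do, via strong markedness, Lemma \ref{lem4_1} and the uniqueness of $m$ from Lemma \ref{lem4_2}. The only cosmetic difference is the final contradiction: the paper concludes directly from the fact that no operation at all applies to the first word while the two words are distinct, whereas you add the (valid) observation that type II operations preserve the set of syllable values.
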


\begin{proof}
We know by Lemma \ref{lem4_2} that $o(x)$ and $o(y)$ are both finite and even, that $a=o(x)/2$ and that $b=o(y)/2$.
We also know that $m$ is unique.
Suppose $o(x)\neq 2$.
We choose $c\in\Z_{o(x)}$ such that $c\neq0$ and $c\neq a$.
Let
\[
w=(x^{a-c})\cdot[y^b,x^a]_{m-1}\cdot(x^{-c})\quad\text{and}\quad w'=(x^{-c})\cdot[y^b,x^a]_{m-1}\cdot(x^{a-c})\,.
\]
Since $\overline{[x^a,y^b]}_m=\overline{[y^b,x^a]}_m$, we have $\overline{w}=\overline{w'}$.
So, either we can reduce the length of both, $w$ and $w'$, by applying elementary M-operations, if $\lg_{S(X)}(\overline{w})=\lg_{S(X)}(\overline{w'})<m+1$, or we can go from $w$ to $w'$ through a finite sequence of elementary M-operations of type II, if $\lg_{S(X)}(\overline{w})=\lg_{S(X)}(\overline{w'})=m+1$.
In both cases we must be able to apply an elementary M-operation to $w$.
We cannot apply any elementary M-operation of type I to $w$ since $x\neq y$.
We cannot apply any elementary M-operation of type II to $w$ either since $m$ is unique, $a-c\neq a$, and $-c\neq a$ (recall that $-a=a=o(x)/2$).
This is a contradiction, hence we necessarily have $o(x)=2$ and $a=1$.
We prove in the same way that $o(y)=2$ and $b=1$.
\end{proof}

\begin{lem}\label{lem4_4}
Let $(G,X)$ be a strongly marked group with Property $\DD$.
Let $x,y\in X$, $x\neq y$, $a\in\Z_{o(x)}\setminus\{0\}$, $b\in\Z_{o(y)}\setminus\{0\}$ and $m\ge3$ odd.
Assume $xy\neq yx$, $\overline{[x^a,y^b]}_m=\overline{[y^b,x^a]}_m$ and $\lg_{S(X)}(\overline{[x^a,y^b]}_m)=m$.
Then either $o(x)=2$ (and $a=1$), or $o(y)=2$ (and $b=1$).
\end{lem}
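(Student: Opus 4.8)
The plan is to argue by contradiction, mirroring the structure of the proof of Lemma \ref{lem4_3} but exploiting the fact that for odd $m$ the braid relation is genuinely asymmetric in $x$ and $y$. By Lemma \ref{lem4_2} I already know that $o(x)$ and $o(y)$ are finite and even, that $a=o(x)/2$ and $b=o(y)/2$ (so in particular $-a=a$ and $-b=b$ in the respective cyclic groups), and that $m$ is the unique exponent for which the braid relation holds with full syllabic length. First I would assume, for contradiction, that $o(x)\neq 2$ \emph{and} $o(y)\neq 2$; since both orders are even this forces $o(x)\ge 4$ and $o(y)\ge 4$, so I may choose $c\in\Z_{o(x)}\setminus\{0,a\}$ and $d\in\Z_{o(y)}\setminus\{0,b\}$.

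The key algebraic observation is that, because $m$ is odd, one has $\overline{[x^a,y^b]}_m=x^aR$ and $\overline{[y^b,x^a]}_m=Ry^b$ with $R=\overline{[y^b,x^a]}_{m-1}$, so the braid relation reads $x^aR=Ry^b$. This is where the odd case diverges from Lemma \ref{lem4_3}, whose relation $x^aQ=Qx^a$ was symmetric and led to a double $x$-perturbation. Here I would instead perturb one end by a power of $x$ and the other end by a power of $y$, setting
\[
w=(x^{a-c})\cdot[y^b,x^a]_{m-1}\cdot(y^{-d}),\qquad
w'=(x^{-c})\cdot[y^b,x^a]_{m-1}\cdot(y^{b-d}).
\]
Both are genuine syllabic words of length $m+1$ since $a-c$, $-c$, $-d$ and $b-d$ are all nonzero, and a one-line computation using $x^aR=Ry^b$ gives $\overline{w}=x^{a-c}Ry^{-d}=x^{-c}(x^aR)y^{-d}=x^{-c}(Ry^b)y^{-d}=\overline{w'}$. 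Note also that $w\neq w'$, since their first syllables $x^{a-c}$ and $x^{-c}$ differ because $a\neq 0$.

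Now I would apply Property $\DD$ to the pair $w,w'$: whether or not $\overline{w}$ has syllabic length $m+1$, in both alternatives some elementary M-operation must be applicable to $w$ (a length-shortening one if $w$ is not reduced, a type II one carrying $w$ towards $w'\neq w$ if it is). The heart of the argument is then to check that \emph{no} elementary M-operation applies to $w$, which yields the contradiction. No type I operation applies because every adjacent pair of syllables in $w$ pairs a power of $x$ with a power of $y$ and $(G,X)$ is strongly marked with $x\neq y$. For type II operations I would rule out the candidate alternating blocks: the internal block $[y^b,x^a]_{m-1}$, together with every contiguous alternating subword inside it, cannot be flipped because any such flip would require the braid relation together with full length at some $k$ with $2\le k\le m-1<m$, contradicting the uniqueness of $m$ from Lemma \ref{lem4_2}; while the two length-two end blocks $[x^{a-c},y^b]_2$ and $[x^a,y^{-d}]_2$ cannot be flipped because that would force $x^{a-c}$ (resp.\ $x^a$) to commute with $y^b$ (resp.\ $y^{-d}$), which by Lemma \ref{lem4_1} would give $xy=yx$, contrary to hypothesis. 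Crossing flips are excluded since $x^{a-c}\neq x^a$ (as $c\neq 0$) and $y^{-d}\neq y^b$ (as $d\neq b$ and $-b=b$) prevent the end syllables from extending the internal alternation.

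The expected main obstacle is precisely the choice of $w$ and $w'$: one must realise that the odd braid relation moves an $x$-syllable on the left to a $y$-syllable on the right, so the two perturbations must live on different generators, and then verify carefully that the mismatched end syllables $x^{a-c}$ and $y^{-d}$ block every type II operation. Granting that no M-operation applies, the contradiction forces $o(x)=2$ or $o(y)=2$, and then $a=o(x)/2=1$ or $b=o(y)/2=1$ respectively, which is the claim.
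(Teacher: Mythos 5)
Your proposal is correct and follows essentially the same route as the paper's proof: after invoking Lemma \ref{lem4_2}, you pick the same $c$ and $d$, form the very same pair of words $w=(x^{a-c})\cdot[y^b,x^a]_{m-1}\cdot(y^{-d})$ and $w'=(x^{-c})\cdot[y^b,x^a]_{m-1}\cdot(y^{b-d})$, and derive the contradiction from Property $\DD$ by ruling out all elementary M-operations on $w$. Your write-up is in fact slightly more explicit than the paper's (noting $w\neq w'$ and separating the internal, end, and crossing type II blocks), but the argument is the same.
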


\begin{proof}
We know by Lemma \ref{lem4_2} that $o(x)$ and $o(y)$ are both finite and even, that $a=o(x)/2$ and that $b=o(y)/2$.
We also know that $m$ is unique.
Suppose $o(x)\neq2$ and $o(y)\neq2$.
We choose $c\in\Z_{o(x)}$ and $d\in\Z_{o(y)}$ such that $c\neq0$, $c\neq a$, $d\neq0$ and $d\neq b$.
Let 
\[
w=(x^{a-c})\cdot[y^b,x^a]_{m-1}\cdot(y^{-d})\quad\text{and}\quad w'=(x^{-c})\cdot[y^b,x^a]_{m-1}\cdot(y^{b-d})\,.
\]
Since $\overline{[x^a,y^b]}_m=\overline{[y^b,x^a]}_m$, we have $\overline{w}=\overline{w'}$.
So, either we can reduce the length of both, $w$ and $w'$, by applying elementary M-operations, if $\lg_{S(X)}(\overline{w})=\lg_{S(X)}(\overline{w'})<m+1$, or we can go from $w$ to $w'$ through a finite sequence of elementary M-operations of type II, if $\lg_{S(X)}(\overline{w})=\lg_{S(X)}(\overline{w'})=m+1$.
In both cases we must be able to apply an elementary M-operation to $w$.
We cannot apply any elementary M-operation of type I to $w$ since $x\neq y$.
We cannot apply any elementary M-operation of type II to $w$ either since $m$ is unique, $a-c\neq a$, and $-d\neq b$ (recall that $-b=b=o(y)/2$).
This is a contradiction, hence we necessarily have $o(x)=2$ or $o(y)=2$.
\end{proof}

\begin{proof}[Proof of Theorem \ref{thm2_3}]
Let $(G,X)$ be a strongly marked group with Property $\DD$.
We start by defining a simplicial graph $\Gamma$ and maps $m:E(\Gamma)\to\N_{\ge2}$ and $f:V(\Gamma)\to\N_{\ge2}\cup \{\infty\}$.
The set $V(\Gamma)$ is a set in one-to-one correspondence with $X$, and we set $X=\{x_v\mid v\in V(\Gamma)\}$.
We set $f(v)=o(x_v)$ for all $v\in V(\Gamma)$.
A pair $e=\{u,v\}$ belongs to $E(\Gamma)$ if and only if there exist $a\in\Z_{f(u)}\setminus\{0\}$, $b\in\Z_{f(v)}\setminus\{0\}$ and $m\ge2$ such that $\overline{[x_u^a,x_v^b]}_m=\overline{[x_v^b,x_u^a]}_m$ and $\lg_{S(X)}(\overline{[x_u^a,x_v^b]}_m)=m$.
We know by Lemma \ref{lem4_2} that such an $m$ is unique if it exists.
In that case we set $m(e)=m$.
It follows from Lemmas \ref{lem4_2}, \ref{lem4_3} and \ref{lem4_4} that the triple $(\Gamma,m,f)$ defines a quasi-Dyer group $QD=QD(\Gamma,m,f)$.

In order to differentiate the standard generators of $QD$ from the elements of $X$, we denote by $Y=\{y_v\mid v\in V(\Gamma)\}$ the standard generating set for $QD$.
By Lemmas \ref{lem4_1}, \ref{lem4_3} and \ref{lem4_4} we have a homomorphism $\varphi:QD\to G$ which sends $y_v$ to $x_v$ for all $v\in V(\Gamma)$.
This homomorphism is surjective since $X$ generates $G$.
On the other hand we define a set-map $\psi:G\to QD$ as follows.
Let $g\in G$.
We choose a syllabic expression $w=(x_{v_1}^{a_1},x_{v_2}^{a_2},\dots,x_{v_\ell}^{a_\ell})$ for $g$ and we set $\psi(g)=y_{v_1}^{a_1}y_{v_2}^{a_2}\cdots y_{v_\ell}^{a_\ell}\in QD$.
The fact that $(G,X)$ has Property $\DD$ combined with Lemmas \ref{lem4_1}, \ref{lem4_3} and \ref{lem4_4} implies that $\psi(g)$ is well-defined in the sense that its definition does not depend on the choice of the syllabic expression.
Now we show that $\psi\circ\varphi=\id_{QD}$.
This implies that $\varphi$ is also injective and therefore ends the proof of the theorem.

Let $h\in QD$.
Let $(y_{v_1}^{a_1},y_{v_2}^{a_2},\dots,y_{v_\ell}^{a_\ell})$ be a syllabic expression of $h$.
Then $(x_{v_1}^{a_1},x_{v_2}^{a_2},\dots,x_{v_\ell}^{a_\ell})$ is a syllabic expression of $\varphi(h)$, hence $(\psi\circ\varphi)(h)=y_{v_1}^{a_1}y_{v_2}^{a_2}\cdots y_{v_\ell}^{a_\ell}=h$.
So, $\psi\circ\varphi=\id_{QD}$.
\end{proof}


\section{Two generators quasi-Dyer groups}\label{sec5}

In this section we study the group
\[
QD_{m,k}=\langle x,y\mid x^2=y^{2k}=1\,,\ [x,y^k]_m=[y^k,x]_m\rangle\,,
\]
where $m\ge3$ is odd and $k\ge2$.
We start by showing that $(QD_{m,k},\{x,y\})$ is strongly marked.

\begin{lem}\label{lem5_1}
Let $m\ge3$ odd and $k\ge2$.
Then $(QD_{m,k},\{x,y\})$ is strongly marked.
\end{lem}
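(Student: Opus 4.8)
The plan is to realize $QD_{m,k}$ as an amalgamated free product and then read the strongly marked conditions directly off the standard structure theory of such products. Set $z=y^k$; from $y^{2k}=1$ we get $z^2=1$. I would introduce three auxiliary groups: $A=\langle a\mid a^{2k}=1\rangle\cong\Z_{2k}$; $B=\langle b,c\mid b^2=c^2=1,\ [b,c]_m=[c,b]_m\rangle$, which for $m\ge3$ is the dihedral group of order $2m$ with $b,c$ two distinct reflections; and $C=\langle\gamma\mid\gamma^2=1\rangle\cong\Z_2$. Embedding $C$ into $A$ by $\gamma\mapsto a^k$ and into $B$ by $\gamma\mapsto c$, I form $A*_CB$. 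A short Tietze computation (send $a\mapsto y$, $b\mapsto x$, use the amalgamating relation $a^k=c$ to eliminate $c$, and discard the redundant relation $(a^k)^2=a^{2k}=1$) shows $QD_{m,k}\cong A*_CB$, with $y$ corresponding to $a$ and $x$ to $b$.

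The first thing to check is that this is an honest amalgam, i.e.\ that both copies of $C$ embed. The map $\gamma\mapsto a^k$ is injective because $a^k$ has order $2$ in $\Z_{2k}$ (as $1\le k<2k$), and $\gamma\mapsto c$ is injective because $c$ is a reflection of order $2$ in $B$. Granting this, the standard theory of amalgamated free products gives that $A\to QD_{m,k}$ and $B\to QD_{m,k}$ are injective and that $A\cap B=C$ inside $QD_{m,k}$. In particular $o(y)=2k$ and $o(x)=2$, so $x\neq1$ and $y\neq1$ and hence $1\notin X$; moreover, writing $A=\langle y\rangle$, $B=\langle x,y^k\rangle$ and $C=\langle y^k\rangle=\{1,y^k\}$ inside $QD_{m,k}$, we have $\langle x\rangle=\{1,x\}$ with $x\in B\setminus C$.

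It then remains to verify the syllable condition. Since $S(X)\cup\{1\}=(\langle x\rangle\setminus\{1\})\cup(\langle y\rangle\setminus\{1\})\cup\{1\}=\langle x\rangle\cup\langle y\rangle$, I must show that for $s\in\langle x\rangle\setminus\{1\}$ and $t\in\langle y\rangle\setminus\{1\}$ one has $st\notin\langle x\rangle\cup\langle y\rangle$, and symmetrically with the roles of $x$ and $y$ exchanged. Here $s=x\in B\setminus C$ and $t=y^j$ with $1\le j\le2k-1$. If $t\notin C$, then $t\in A\setminus C$, and $st\in\langle x\rangle\subseteq B$ would force $t=s^{-1}(st)\in A\cap B=C$, while $st\in\langle y\rangle=A$ would force $s=(st)t^{-1}\in A\cap B=C$; both contradict $s,t\notin C$. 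If $t\in C$, i.e.\ $t=y^k=z$, then $st=xz\in B$ has order $m\ge3$, so it is neither $1$ nor $x$ (hence $st\notin\langle x\rangle$) and it lies outside $A\cap B=C=\{1,z\}$ (hence $st\notin\langle y\rangle$). The symmetric case is handled identically, which establishes that $(QD_{m,k},\{x,y\})$ is strongly marked.

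The one genuinely substantive point is setting up the amalgam correctly: identifying the factor $B$ with the dihedral group of order $2m$ (in particular checking that $b^2=c^2=1$ together with $[b,c]_m=[c,b]_m$ does not collapse and keeps $b\neq c$) and verifying injectivity of the two copies of $C$. Once $A*_CB$ is in place, everything needed follows formally from the two facts that the factors embed and that $A\cap B=C$; no normal-form computation beyond length-two words is required. I therefore expect the Tietze comparison of presentations and the confirmation that $B$ is truly dihedral of order $2m$ to be where the care is needed, the rest being routine.
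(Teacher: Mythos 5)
Your proof is correct and follows essentially the same route as the paper: both realize $QD_{m,k}$ as the amalgamated product of the dihedral group of order $2m$ and the cyclic group of order $2k$ over a common $\Z_2$ (identified with $\langle y^k\rangle$), and then deduce the syllable condition from the embedding of the factors, treating the case $t=y^k$ separately via the fact that $xy^k$ is a rotation of order $m\ge 3$. The only cosmetic difference is that you invoke $A\cap B=C$ where the paper cites the normal form theorem for amalgams; these are equivalent consequences of the same structure theory.
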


\begin{proof}
We have $X=\{x,y\}$ and $S(X)=\{x,y,y^2,\dots,y^{2k-1}\}$.
We choose $b\in\{1,\dots,2k-1\}$ and we show that $xy^b\not\in S(X)\cup\{1\}$.
Let 
\[
D_m=\langle x,y'\mid x^2=y'^2=1\,,\ [x,y']_m=[y',x]_m \rangle\,,\
C_{2k}=\langle y\mid y^{2k}=1\rangle\,,\
K=\langle z\mid z^2=1\rangle\,.
\]
Observe that $D_m$ is a dihedral group of order $2m$, $C_{2k}$ is a cyclic group of order $2k$, and $K$ is a cyclic group of order $2$.
We have an embedding of $K$ into $D_m$ which sends $z$ to $y'$, we have an embedding of $K$ into $C_{2k}$ which sends $z$ to $y^k$, and we have $QD_{m,k} = D_m*_KC_{2k}$.
If $b\neq k$, then $y^b\in C_{2k}\setminus K$ and $x\in D_m\setminus K$, hence, by the general theory of normal forms in amalgamated products of groups (see Serre \cite{Serre1}), $xy^b\not\in D_m$ and $xy^b\not\in C_{2k}$, hence $xy^b\not\in S(X)\cup\{1\}$.
Suppose $b=k$.
Then $y^b=y'\in D_m$, hence $xy^b=xy'$ is a non-trivial rotation, thus $xy^b\not\in\{x,y',1\}=D_m\cap(S(X)\cup\{1\})$, and therefore $xy^b\not\in S(X)\cup\{1\}$.
\end{proof}

The rest of the proof of Proposition \ref{prop2_4} uses rewriting systems.
Since we do not assume the reader to be familiar with them, we start by giving the necessary background for understanding our proof, and we refer to Cohen \cite{Cohen1} and Le Chenadec \cite{LeChe1} for detailed explanations.

Let $S$ be a finite set (called an \emph{alphabet}) and let $S^*$ be the free monoid on $S$.
The elements of $S^*$ are called \emph{words} and they are written as finite sequences, as for syllabic words.
The empty word is denoted by $\epsilon$.
The concatenation of two words $w,w'\in S^*$ is written $w\cdot w'$.
A \emph{rewriting system} on $S^*$ is a subset $R\subset S^*\times S^*$.
Let $w,w'\in S^*$.
We set $w\to_R w'$ or simply $w\to w'$ if there exist $w_1,w_2\in S^*$ and $(u,v)\in R$ such that $w=w_1\cdot u\cdot w_2$ and $w'=w_1\cdot v\cdot w_2$.
More generally, we set $w\to_R^*w'$ or simply $w\to^*w'$ if $w'=w$ or if there exists a finite sequence $w_0=w,w_1,\dots,w_p=w'$ in $S^*$ such that $w_{i-1}\to w_i$ for all $i\in\{1,\dots,p\}$.
A word $w\in S^*$ is \emph{$R$-reducible} if there exists a word $w'\in S^*$ such that $w\to w'$.
We say that $w$ is \emph{$R$-irreducible} otherwise.
The pair $(S,R)$ is a \emph{rewriting system for a monoid} $M$ if $\langle S\mid u=v\text{ for }(u,v)\in R\rangle^+$ is a monoid presentation for $M$.
A rewriting system for a group $G$ is a rewriting system for $G$ viewed as a monoid.
In particular, in that case $S$ must generate $G$ as a monoid.
If $(S,R)$ is a rewriting system for a monoid $M$ and $w=(s_1,\dots,s_\ell)\in S^*$, then we denote by $\overline{w}= s_1s_2\cdots s_\ell$ the element of $M$ represented by $w$.

Let $R$ be a rewriting system.
We say that $R$ is \emph{Noetherian} if there is no infinite sequence $w_0\to w_1\to w_2\to\cdots$ in $S^*$.
On the other hand we say that $R$ is \emph{confluent} if, for all $u,v_1,v_2\in S^*$ such that $u\to^*v_1$ and $u\to^*v_2$, there exists $w\in S^*$ such that $v_1\to^*w$ and $v_2\to^*w$.
We say that $R$ is \emph{complete} if it is both, Noetherian and confluent.

Theorems \ref{thm5_2} and \ref{thm5_3} below contain classical results on rewriting systems that we will use in our proof of Proposition \ref{prop2_4}.

\begin{thm}[Newman \cite{Newma1}]\label{thm5_2}
Let $(S,R)$ be a complete rewriting system for a monoid $M$.
\begin{itemize}
\item[(1)]
For each $w'\in S^*$ there exists a unique $R$-irreducible word $w\in S^*$ such that $w'\to^* w$.
\item[(2)]
For each $g\in M$ there exists a unique $R$-irreducible word $w\in S^*$ such that $\overline{w}=g$.
\end{itemize}
\end{thm}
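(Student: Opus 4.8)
The plan is to derive both statements directly from the two hypotheses, Noetherian and confluent, handling (1) first and then bootstrapping to (2). Note at the outset that confluence is stated here for the relation $\to^*$ rather than for single steps, so the local-to-global step of Newman's lemma is not needed; the global confluence property can be fed directly into the uniqueness arguments. For existence in (1), I would use only the Noetherian property: starting from $w'$ and applying reductions greedily yields a sequence $w'=w_0\to w_1\to w_2\to\cdots$, and since $R$ is Noetherian this sequence cannot be infinite, so it halts at some $R$-irreducible word $w$ with $w'\to^* w$. For uniqueness in (1), suppose $w'\to^* w_1$ and $w'\to^* w_2$ with both $w_1,w_2$ irreducible. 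Confluence applied to $u=w'$, $v_1=w_1$, $v_2=w_2$ gives a word $w$ with $w_1\to^* w$ and $w_2\to^* w$; but an irreducible word admits no reduction, so $w_1\to^* w$ forces $w_1=w$ and likewise $w_2=w$, whence $w_1=w_2$.

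For (2), I would first record the standard fact that $\overline{w}=\overline{w'}$ in $M$ if and only if $w$ and $w'$ are linked by a finite chain of elementary steps, each replacing a factor $u$ by $v$ or $v$ by $u$ for some $(u,v)\in R$; this is exactly the statement that $M=S^*/\!\sim$, where $\sim$ is the congruence generated by $R$. Existence of an irreducible representative of a given $g\in M$ is then immediate: choose any word $w'$ with $\overline{w'}=g$ (possible since $S$ generates $M$), apply (1) to obtain an irreducible $w$ with $w'\to^* w$, and observe that each reduction step replaces a factor $u$ by $v$ with $u=v$ in $M$, so $\overline{w}=\overline{w'}=g$.

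The crux is uniqueness in (2), which is where the real work lies. Given irreducible $w_1,w_2$ with $\overline{w_1}=\overline{w_2}$, the characterization above furnishes a conversion chain $w_1=u_0,u_1,\dots,u_n=w_2$ in which consecutive words satisfy $u_{i-1}\to u_i$ or $u_i\to u_{i-1}$. The key lemma, the \emph{Church--Rosser property}, states that confluence upgrades any such chain to a common reduct: if $u_0$ and $u_n$ are so linked, there is a word $w$ with $u_0\to^* w$ and $u_n\to^* w$. I would prove this by induction on $n$, the case $n=0$ being trivial. For the inductive step, the shorter chain $u_0,\dots,u_{n-1}$ gives by hypothesis a common reduct $t$ with $u_0\to^* t$ and $u_{n-1}\to^* t$. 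If the last link is $u_n\to u_{n-1}$, then $u_n\to u_{n-1}\to^* t$, so $t$ already works. If instead $u_{n-1}\to u_n$, then $u_{n-1}\to^* t$ and $u_{n-1}\to^* u_n$, so confluence applied to $u_{n-1}$ yields $w$ with $t\to^* w$ and $u_n\to^* w$, and $u_0\to^* t\to^* w$ completes the step. Applying this lemma to $w_1,w_2$ gives a common reduct $w$; since both are irreducible, $w_1=w=w_2$. I expect this Church--Rosser induction to be the only genuinely delicate point, since it is what converts a two-sided conversion into a one-sided reduction; everything else is a routine unwinding of the definitions.
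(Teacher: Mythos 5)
Your argument is correct. Note, however, that the paper does not prove this statement at all: it is quoted as a classical result of Newman and used as a black box in the proof of Proposition \ref{prop2_4}, so there is no proof in the paper to compare yours against. What you have written is the standard textbook derivation --- termination gives existence of normal forms, global confluence gives their uniqueness under $\to^*$, and the Church--Rosser induction upgrades a two-sided conversion chain to a common reduct, which is exactly what is needed to pass from part (1) to the uniqueness in part (2). Your opening observation is also apt: since the paper defines ``confluent'' directly for $\to^*$ rather than for single steps, the local-to-global content of Newman's lemma (which is really Theorem \ref{thm5_3} here) is not needed for Theorem \ref{thm5_2}, and the only genuinely non-trivial step is indeed the Church--Rosser argument. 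I see no gaps.
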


Suppose $S^*$ is endowed with a total order $\le$ such that:
\begin{itemize}
\item[(a)]
there is no infinite descending chain $w_0>w_1>w_2>\cdots$ in $S^*$, and
\item[(b)]
for all $u,v,w_1,w_2\in S^*$, if $u>v$, then $w_1\cdot u\cdot w_2>w_1\cdot v\cdot w_2$.
\end{itemize}
For instance, if $S$ itself is endowed with a total order, then the graded lexicographical order on $S^*$, denoted $\le_{\Glex}$, is a total order on $S^*$ satisfying Conditions (a) and (b).
Then a rewriting system $R$ on $S^*$ satisfying $u>v$ for all $(u,v)\in R$ is Noetherian.

A \emph{critical pair} in a rewriting system $R$ is a quintuple $(u_1,u_2,u_3,v_1,v_2)$ of elements of $S^*$ satisfying one of the following two conditions:
\begin{itemize}
\item[(a)]
$(u_1\cdot u_2,v_1)\in R$, $(u_2\cdot u_3,v_2)\in R$, and $u_2\neq\epsilon$,
\item[(b)]
$(u_1\cdot u_2\cdot u_3,v_1)\in R$ and $(u_2,v_2)\in R$.
\end{itemize}
We say that a critical pair $(u_1,u_2,u_3,v_1,v_2)$ is \emph{resolved} if there exists $w\in S^*$ such that
\begin{itemize}
\item
$v_1\cdot u_3\to^*w$ and $u_1\cdot v_2\to^*w$ in Case (a),
\item
$v_1\to^*w$ and $u_1\cdot v_2\cdot u_3\to^*w$ in Case (b).
\end{itemize}

\begin{thm}[Newman \cite{Newma1}]\label{thm5_3}
Let $R$ be a Noetherian rewriting system.
If all critical pairs of $R$ are resolved, then $R$ is confluent.
\end{thm}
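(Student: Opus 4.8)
The plan is to factor the argument through the notion of \emph{local confluence}: say that $R$ is \emph{locally confluent} if for all $u,v_1,v_2\in S^*$ with $u\to v_1$ and $u\to v_2$ (single reduction steps), there exists $w\in S^*$ with $v_1\to^*w$ and $v_2\to^*w$. The theorem then splits into two independent implications. First, for a Noetherian system local confluence already implies confluence (this is Newman's Lemma proper). Second, if all critical pairs of $R$ are resolved, then $R$ is locally confluent. The Noetherian hypothesis is consumed only by the first implication, while the resolution of critical pairs feeds only the second.

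For the first implication I would argue by Noetherian induction. Since $R$ is Noetherian, the relation $\to^+$ admits no infinite descending chain and is therefore well-founded, so it suffices to prove, assuming the common-reduct property for every strict reduct of $u$, that whenever $u\to^*v_1$ and $u\to^*v_2$ the words $v_1$ and $v_2$ have a common reduct. If $u=v_1$ or $u=v_2$ this is immediate, so write $u\to w_1\to^*v_1$ and $u\to w_2\to^*v_2$. Applying local confluence to the first steps $u\to w_1$ and $u\to w_2$ yields a common reduct $t$ of $w_1$ and $w_2$. Since $w_1$ is a strict reduct of $u$, the induction hypothesis applied to $w_1\to^*v_1$ and $w_1\to^*t$ produces a common reduct $s_1$ of $v_1$ and $t$; then the induction hypothesis at $w_2$, applied to $w_2\to^*v_2$ and $w_2\to^*t\to^*s_1$, produces a common reduct $s$ of $v_2$ and $s_1$. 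This $s$ is the desired common reduct of $v_1$ and $v_2$.

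For the second implication, suppose $u\to v_1$ and $u\to v_2$ arise from applying rules $(\ell_1,r_1),(\ell_2,r_2)\in R$, so that $u=a_1\cdot\ell_1\cdot b_1=a_2\cdot\ell_2\cdot b_2$ with $v_1=a_1\cdot r_1\cdot b_1$ and $v_2=a_2\cdot r_2\cdot b_2$. I would analyse the relative positions of the two occurrences of $\ell_1$ and $\ell_2$ inside $u$, which for strings fall into exactly three cases. If the two factors are disjoint, the rewrites act on non-overlapping parts of $u$, so each can still be performed after the other and the two outcomes coincide in a single word. If one redex sits strictly inside the other, the local picture is precisely a critical pair of type (b); if they overlap in a nonempty factor, it is precisely a critical pair of type (a), in both cases after stripping a fixed left context and right context inherited from $u$. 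By hypothesis that critical pair is resolved, and since the rewriting relation $\to$ is by definition closed under forming $w_1\cdot(-)\cdot w_2$, the resolving reductions ($v_1\cdot u_3\to^*w$ and $u_1\cdot v_2\to^*w$ in type (a), resp. $v_1\to^*w$ and $u_1\cdot v_2\cdot u_3\to^*w$ in type (b)) can be re-inserted into that context to yield a common reduct of $v_1$ and $v_2$. This establishes local confluence, and combined with the first implication gives confluence.

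The main obstacle is the bookkeeping in the critical pair case: one must verify that every way two redexes can meet inside a word falls into exactly one of the three configurations, that the nested and overlapping configurations match the formal definitions of critical pairs of types (b) and (a) once the surrounding context $a_i,b_i$ is removed, and that re-inserting the context genuinely closes the diagram for the original $v_1,v_2$. None of these steps is conceptually deep, but the case distinction and the careful tracking of the contexts is where the argument must be written with care.
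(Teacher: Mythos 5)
The paper does not prove Theorem \ref{thm5_3} at all: it is quoted as a classical result with a citation to Newman, so there is no in-paper argument to compare yours against. Your proof is the standard one and it is correct. The decomposition into (i) Newman's Lemma proper (Noetherian plus local confluence implies confluence, by well-founded induction on $\to^+$) and (ii) the critical pair lemma for string rewriting (resolved critical pairs imply local confluence, by the three-way case analysis disjoint/nested/overlapping) is exactly how this theorem is established in the references the paper points to (Cohen, Le Chenadec). The Noetherian induction step is handled correctly --- in particular you correctly apply the induction hypothesis only at $w_1$ and $w_2$, which are strict reducts of $u$, and you chain the two applications in the right order so that the second one is fed $w_2\to^* t\to^* s_1$. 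In the local confluence step the only things left implicit are the edge cases (adjacent but disjoint redexes, and two distinct rules sharing the same left-hand side at the same position, which is a type (b) pair with $u_1=u_3=\epsilon$), and the routine verification that $\to^*$ is compatible with left and right concatenation so the resolving reductions lift back into the ambient context; you flag these yourself and none of them hides a difficulty. No gap.
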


\begin{proof}[Proof of Proposition \ref{prop2_4}]
Let $m\ge3$ odd and $k\ge 2$, and let
\[
QD_{m,k}=\langle x,y\mid x^2=y^{2k}=1\,,\ [x,y^k]_m=[y^k,x]_m\rangle\,.
\]
We already know by Lemma \ref{lem5_1} that $(QD_{m,k},\{x,y\})$ is strongly marked.
So, it remains to show that $(QD_{m,k},\{x,y\})$ has Property $\DD$.

We set  $S=S(\{x,y\})=\{x,y,y^2,\dots,y^{2k-1}\}$ that we totally order by $x>y>y^2>\cdots>y^{2k-1}$, and we endow $S^*$ with the graded lexicographic order $\le_\Glex$.
Let $R$ be the following rewriting system on $S^*$:
\begin{gather*}
R=\{(x,x)\to\epsilon,[x,y^k]_m\to[y^k,x]_m\}\,\cup\ \{(y^a,y^{-a})\to \epsilon\mid a\in\Z_{2k}\setminus\{0\}\}\\
\cup\ \{(y^a,y^b)\to(y^{a+b})\mid a,b\in\Z_{2k}\setminus \{0\}\,,\ a+b\neq 0\}\,.
\end{gather*}
It is clear that $(S,R)$ is a rewriting system for $QD_{m,k}$ viewed as a monoid.
It is also clear that $R$ is Noetherian since $u>_{\Glex}v$ for all $(u,v)\in R$.
Another immediate observation is that, if $w,w'\in S^*$ are such that $w\to^*w'$, then we can go from $w$ to $w'$ through a finite sequence of elementary M-operations.
Now we prove that $R$ is confluent.

We list below all critical pairs $(u_1,u_2,u_3,v_1,v_2)$ of type (a), and for each of them we give a word $w\in S^*$ such that $u_1\cdot v_2\to^*w$ and $v_1\cdot u_3\to^*w$.
There is no (non-trivial) critical pair of type (b), hence, by Theorem \ref{thm5_3}, these calculations show that $R$ is confluent.
\begin{itemize}
\item
$(u_1,u_2,u_3,v_1,v_2)=((x),(x),(x),\epsilon,\epsilon)$ and $w=(x)$.
\item
$(u_1,u_2,u_3,v_1,v_2)=((x),(x),[y^k,x]_{m-1},\epsilon,[y^k,x]_m)$ and $w=[y^k,x]_{m-1}$.
\item
$(u_1,u_2,u_3,v_1,v_2)=([x,y^k]_{m-1},(x),(x),[y^k,x]_m,\epsilon)$ and $w=[x,y^k]_{m-1}$.
\item
$(u_1,u_2,u_3,v_1,v_2)=([x,y^k]_{2a},[x,y^k]_{2b-1},[y^k,x]_{2a},[y^k,x]_m,[y^k,x]_m)$ and $w=[y^k,x]_{2b-1}$, where $a,b\in\N_{\ge 1}$ and $2a+2b-1=m$.
\item
$(u_1,u_2,u_3,v_1,v_2)=((y^a),(y^{-a}),(y^a),\epsilon,\epsilon)$ and $w=(y^a)$, where $a\in\Z_{2k}\setminus \{0\}$.
\item
$(u_1,u_2,u_3,v_1,v_2)=((y^a),(y^{-a}),(y^b),\epsilon,(y^{-a+b}))$ and $w=(y^b)$, where $a,b\in\Z_{2k}\setminus\{0\}$ and $-a+b\neq0$.
\item
$(u_1,u_2,u_3,v_1,v_2)=((y^a),(y^b),(y^{-b}),(y^{a+b}),\epsilon)$ and $w=(y^a)$, where $a,b\in\Z_{2k}\setminus\{0\}$ and $a+b\neq0$.
\item
$(u_1,u_2,u_3,v_1,v_2)=((y^a),(y^b),(y^c),(y^{a+b}),(y^{b+c}))$, $w=\epsilon$ if $a+b+c=0$, and $w=(y^{a+b+c})$ if $a+b+c\neq0$, where $ a,b,c\in\Z_{2k}\setminus\{0\}$, $a+b\neq0$ and $b+c\neq0$.
\end{itemize}

In order to prove that $QD_{m,k}$ has Property $\DD$ it suffices to show that, if $w$ and $w'$ are two M-reduced syllabic words representing the same element of $QD_{m, k}$, then we can go from $w$ to $w'$ through a finite sequence of elementary M-operations of type II.
Since $R$ is confluent, by Theorem \ref{thm5_2} there exists a unique $R$-irreducible word $w_0\in S^*$ such that $w\to^*w_0$ and $w'\to^*w_0$.
As previously indicated, this implies that we can go from $w$ to $w_0$ through a finite sequence of elementary M-operations.
Moreover, $\lg(w)\ge\lg(w_0)$ (since $w\ge_{\Glex}w_0$) and $w$ is M-reduced, hence all these elementary M-operations must be of type II.
Similarly, we can go from $w'$ to $w_0$ through a finite sequence of elementary M-operations of type II.
So, we can go from $w$ to $w'$ through a finite sequence of elementary M-operations of type II.
\end{proof}


\section{Parabolic subgroups}\label{sec6}

Let $(G,X)$ be a strongly marked group with Property $\DD$ and let $QD(\Gamma,m,f)$ be the quasi-Dyer presentation for $(G,X)$.
Recall that, for $U\subset V(\Gamma)$, we denote by $\Gamma_U$ the full subgraph of $\Gamma$ spanned by $U$, we denote by $m_U:E(\Gamma_U)\to\N_{\ge2}$ the restriction of $m$ to $E(\Gamma_U)$, and we denote by $f_U:V(\Gamma_U)\to\N_{\ge 2}\cup\{\infty\}$ the restriction of $f$ to $V(\Gamma_U)$.
We begin by proving Proposition \ref{prop2_7}.

\begin{proof}[Proof of Proposition \ref{prop2_7}]
Let $U\subset V(\Gamma)$.
Let $Y=\{x_u\mid u\in U\}$.
From the inclusion $S(Y)\subset S(X)$ it follows that $(G_Y,Y)$ is strongly marked.
Again, to show that $(G_Y,Y)$ has Property $\DD$, it suffices to show that, if $w$ and $w'$ are two M-reduced syllabic words in $S(Y)^*$ representing the same element of $G_Y$, then we can go from $w$ to $w'$ through a finite sequence of elementary M-operations of type II.
Let $w$ and $w'$ be two M-reduced syllabic words in $S(Y)^*$ representing the same element of $G_Y$.
Since the elementary M-operations preserve $S(Y)^*$, $w$ and $w'$ are M-reduced seen as elements of $S(X)^*$.
Since $(G,X)$ has Property $\DD$, it follows that we can go from $w$ to $w'$ through a finite sequence of elementary M-operations of type II, and these elementary M-operations preserve $S(Y)^*$.
So, $(G_Y,Y)$ has Property $\DD$.
Finally, the fact that $QD(\Gamma_U,m_U,f_U)$ is the quasi-Dyer presentation for $(G_Y,Y)$ follows from the definition of a quasi-Dyer presentation.
\end{proof}

As indicated in Section \ref{sec2}, the rest of the section concerns only Dyer systems.
We start with the proof of Proposition \ref{prop2_8}.

\begin{proof}[Proof of Proposition \ref{prop2_8}]
Let $(D,X)$ be a Dyer system.
We first prove Part (2).
Part (1) will follow from Part (2). 
Let $Y\subset X$ and let $g\in G$.
Let $g_0\in D_Yg$ be of minimal syllabic length in $D_Yg$.
To prove Part (2) it suffices to show that $\lg_{S(X)}(hg_0)=\lg_{S(X)}(h)+\lg_{S(X)}(g_0) $ for all $h\in D_Y$.

We use the definitions and the notations of Section \ref{sec3}.
Let $(x_1^{a_1},\dots,x_p^{a_p})$ be a reduced syllabic expression for $g_0$ and let $(y_1^{b_1},\dots,y_q^{b_q})$ be a reduced syllabic expression for $h$.
For $i\in\{1,\dots,p\}$ and $j\in\{1,\dots,q\}$ we set
\[
\rho_i=x_1^{a_1}\cdots x_{i-1}^{a_{i-1}}x_ix_{i-1}^{-a_{i-1}}\cdots x_1^{-a_1}\quad\text{and}\quad \sigma_j=y_1^{b_1}
\cdots y_{j-1}^{b_{j-1}}y_jy_{j-1}^{-b_{j-1}}\cdots y_1^{-b_1}\,.
\]
We have
\[
N(g_0)=\sum_{i=1}^pa_i[\rho_i]\quad\text{and}\quad N(h)=\sum_{j=1}^qb_j[\sigma_j]\,.
\]
By Theorem \ref{thm3_1} the $\rho_i$ are pairwise distinct and the $\sigma_j$ are pairwise distinct.

Set $R_Y=\{kyk^{-1}\mid k\in D_Y\text{ and }y\in Y\}$.
Since elementary M-operations preserve $S(Y)^*$ it follows from Theorem \ref{thm2_2} that $y_j\in Y$ for all $j\in\{1,\dots,q\}$, hence $\sigma_j\in R_Y$ for all $j\in\{1,\dots ,q\}$.
Suppose there exists $i\in\{1,\dots,p\}$ such that $\rho_i\in R_Y$.
Let $g_0'=x_1^{a_1}\cdots x_{i-1}^{a_{i-1}}x_{i+1}^{a_{i+1}}\cdots x_p^{a_p}$.
Then $\lg_{S(X)}(g_0')<\lg_{S(X)}(g_0)=p$, $g_0=\rho_i^{a_i}g_0'$ and $\rho_i^{a_i}\in D_Y$, which would contradict the minimalilty of the syllabic length of $g_0$ in $D_Yg$.
So, $\rho_i\not\in R_Y$ for all $i\in\{1,\dots,p\}$.
Since $h\in D_Y$, it follows that $h\rho_ih^{-1}\not\in R_Y$ for all $i\in\{1,\dots,p\}$.

By Theorem \ref{thm3_1},
\[
N(hg_0)=N(h)+h\cdot N(g_0)=\sum_{j=1}^q b_j[\sigma_j]+\sum_{i=1}^pa_i[h\rho_i h^{-1}]\,.
\]
We know that the $\sigma_j$ are pairwise distinct and we know that the $h\rho_ih^{-1}$ are pairwise distinct.
We also know that $\sigma_j\neq h\rho_ih^{-1}$ for $j\in\{1,\dots,q\}$ and $i\in\{1,\dots,p\}$, since $\sigma_j\in R_Y$ and $h\rho_ih^{-1}\not\in R_Y$.
By Theorem \ref{thm3_1} it follows that 
\[
\lg_{S(X)}(hg_0)=q+p=\lg_{S(X)}(h)+\lg_{S(X)}(g_0)\,.
\]

Now we prove Part (1).
Let $g_0$ be an element of minimal syllabic length in $gD_Y$.
Then $g_0^{-1}$ is of minimal syllabic length in $D_Yg^{-1}$.
By Part (2) already proved, it follows that, for all $h\in D_Y$,
\[
\lg_{S(X)}(g_0h)=\lg_{S(X)}(h^{-1}g_0^{-1})=\lg_{S(X)}(h^{-1})+\lg_{S(X)}(g_0^{-1})=\lg_{S(X)}(g_0)+
\lg_{S(X)}(h)\,.
\proved
\]
\end{proof}

The rest of the section is dedicated to the proof of Theorem \ref{thm2_10}.
Lemmas \ref{lem6_1}, \ref{lem6_2} and \ref{lem6_3} are preliminaries to its proof, but they are also interesting by themselves.
Note that Lemmas \ref{lem6_1} and \ref{lem6_2} hold for all Dyer groups, while Lemma \ref{lem6_3} requires the Dyer group to be of finite type.

\begin{lem}\label{lem6_1}
Let $(D,X)$ be a Dyer system, let $Y,Y'\subset X$, and let $g_0 \in D$.
Assume $g_0$ is of minimal syllabic length in the double coset $D_{Y}g_0D_{Y'}$.
Set $Z=Y\cap(g_0Y'g_0^{-1})$.
Then $D_{Y}\cap(g_0D_{Y'}g_0^{-1})=D_Z$.
\end{lem}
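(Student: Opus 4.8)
The statement splits into two inclusions, and I would dispose of $D_Z\subseteq D_Y\cap(g_0D_{Y'}g_0^{-1})$ immediately: if $x_v\in Z$ then $x_v\in Y$ and $g_0^{-1}x_vg_0\in Y'$, so $x_v\in D_Y$ and $x_v\in g_0D_{Y'}g_0^{-1}$. All the content is in the reverse inclusion, which I would prove by induction on $\lg_{S(X)}(h)$ for $h\in D_Y\cap(g_0D_{Y'}g_0^{-1})$, converting everything to a statement about supports via Lemma \ref{lem2_5}. The main tool is the module $N$ of Theorem \ref{thm3_1}: for $g\in D$ I write $\mathrm{Inv}(g)=\{\rho\in R\mid a_\rho(g)\neq0\}$, so that $\lg_{S(X)}(g)=|\mathrm{Inv}(g)|$.

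The first ingredient is that $g_0$, being minimal in the double coset, is minimal in $D_Yg_0$ and in $g_0D_{Y'}$; by Proposition \ref{prop2_8} this gives $\lg_{S(X)}(kg_0)=\lg_{S(X)}(k)+\lg_{S(X)}(g_0)$ for $k\in D_Y$ and $\lg_{S(X)}(g_0h')=\lg_{S(X)}(g_0)+\lg_{S(X)}(h')$ for $h'\in D_{Y'}$. Combined with $N(kg_0)=N(k)+k\cdot N(g_0)$ and Theorem \ref{thm3_1}(2), length-additivity forces the disjoint decomposition $\mathrm{Inv}(kg_0)=\mathrm{Inv}(k)\sqcup k\cdot\mathrm{Inv}(g_0)$ with no cancellation. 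Rewriting this as $\mathrm{Inv}(g_0)\cap\mathrm{Inv}(k^{-1})=\emptyset$ (using $N(k^{-1})=-k^{-1}\cdot N(k)$) and letting $k$ range over $D_Y$ yields the key fact that $\mathrm{Inv}(g_0)\cap\mathrm{Inv}(k)=\emptyset$ for every $k\in D_Y$. Now for $h\in D_Y\cap(g_0D_{Y'}g_0^{-1})$, writing $h=g_0h'g_0^{-1}$ with $h'\in D_{Y'}$ and using $hg_0=g_0h'$, I obtain $\mathrm{Inv}(h)\subseteq\mathrm{Inv}(hg_0)=\mathrm{Inv}(g_0h')=\mathrm{Inv}(g_0)\sqcup g_0\cdot\mathrm{Inv}(h')$; since the key fact with $k=h$ gives $\mathrm{Inv}(h)\cap\mathrm{Inv}(g_0)=\emptyset$, this forces $\mathrm{Inv}(h)\subseteq g_0\cdot\mathrm{Inv}(h')\subseteq g_0R_{Y'}g_0^{-1}$.

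For the inductive step, let $x_1^{a_1}$ be the first syllable of a reduced expression of $h\neq1$. Then $x_1\in\Supp(h)\subseteq Y$ and $x_1=\rho_1\in\mathrm{Inv}(h)$, so $\sigma:=g_0^{-1}x_1g_0\in\mathrm{Inv}(h')\subseteq D_{Y'}$. Right-minimality gives $\lg_{S(X)}(x_1g_0)=\lg_{S(X)}(g_0)+\lg_{S(X)}(\sigma)$, while left-minimality gives $\lg_{S(X)}(x_1g_0)=1+\lg_{S(X)}(g_0)$, so $\lg_{S(X)}(\sigma)=1$; thus $\sigma$ is a single syllable $x_u^c$ with $x_u\in\Supp(\sigma)\subseteq Y'$ by Lemma \ref{lem2_5}. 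Granting for a moment that $\sigma=x_u$ is the generator itself, I get $x_1=g_0x_ug_0^{-1}\in g_0Y'g_0^{-1}$, hence $x_1\in Z$; moreover $h_1:=x_1^{-a_1}h=g_0(x_u^{-a_1}h')g_0^{-1}$ lies in $D_Y\cap(g_0D_{Y'}g_0^{-1})$ and has $\lg_{S(X)}(h_1)=\lg_{S(X)}(h)-1$, so by the induction hypothesis $h_1\in D_Z$ and therefore $h=x_1^{a_1}h_1\in D_Z$, completing the induction.

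The one delicate point, and the step I expect to require the most care, is the claim that the length-one syllable $\sigma=x_u^c$ is the generator $x_u$ — equivalently, that a reflection of syllabic length one is itself a generator. This is exactly where the hypothesis that $D$ is a Dyer group (rather than merely quasi-Dyer) is used. If $o(x_u)=2$ then $c=1$ automatically. If $o(x_u)>2$, the defining Dyer condition forbids $x_u$ from occurring in any braid relation, so every relation involving $x_u$ is a commutation; hence $x_u\mapsto x_u$, $x_v\mapsto1$ $(v\neq u)$ extends to a retraction $\pi_u\colon D\to\langle x_u\rangle$. Since $\sigma$ is conjugate to the generator $x_1$, applying $\pi_u$ and using that $\langle x_u\rangle$ is abelian yields $x_u^c=\pi_u(x_1)$, which forces $x_1=x_u$ and $c=1$, so $\sigma=x_u$ as needed. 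I would single out this homomorphism argument as the crux of the proof.
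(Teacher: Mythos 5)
Your proof is correct, and it reaches the conclusion by a genuinely different route from the paper's. Both arguments peel off the first syllable of $h\in D_Y\cap(g_0D_{Y'}g_0^{-1})$ and induct, and both ultimately rest on Theorem \ref{thm3_1} and Proposition \ref{prop2_8}; the difference is in how the first letter $x_1$ is identified with an element of $g_0Y'g_0^{-1}$. The paper applies the Exchange Lemma \ref{lem3_2} to $s_0=x_1^{-a_1}$ and the concatenated reduced word $(z_1^{c_1},\dots,z_\ell^{c_\ell},y_1^{b_1},\dots,y_p^{b_p})$ for $gg_0=g_0g'$, uses the double-coset minimality of $g_0$ to exclude an exchange inside the $g_0$-part, and then gets $x_1=g_0y_jg_0^{-1}$ with $y_j\in Y'$ \emph{for free}, because the reflections $\rho_i$ in Lemma \ref{lem3_2} are by construction conjugates of generators. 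You instead work with the inversion sets $\mathrm{Inv}(\cdot)$ coming from $N$, prove the clean disjointness statement $\mathrm{Inv}(g_0)\cap\mathrm{Inv}(k)=\emptyset$ for all $k\in D_Y$, deduce $\mathrm{Inv}(h)\subseteq g_0\,\mathrm{Inv}(h')\,g_0^{-1}$, and then must separately show that the length-one reflection $\sigma=g_0^{-1}x_1g_0=x_u^c$ is actually the generator $x_u$. Your retraction $\pi_u\colon D\to\langle x_u\rangle$ (valid because $f(u)>2$ forces $m(e)=2$ on every edge at $u$ in a Dyer graph) correctly settles this crux, and it is indeed the one place where the Dyer (rather than quasi-Dyer) hypothesis enters in your version. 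What your approach buys is a reusable disjointness formulation and independence from the precise combinatorial form of the Exchange Lemma; what the paper's buys is that the ``reflection of length one is a generator'' step never arises, since the Exchange Lemma already speaks about generators, not syllables. One stylistic remark: your appeal to the double-coset minimality reduces to the two one-sided minimalities (in $D_Yg_0$ and in $g_0D_{Y'}$), whereas the paper also uses the full double-coset minimality to kill case (i) of the exchange; both are of course available under the hypothesis.
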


\begin{proof}
The inclusion $D_Z\subset D_{Y}\cap(g_0D_{Y'}g_0^{-1})$ is obvious.
So, we only need to prove the reverse inclusion.
Let $g\in D_{Y}\cap(g_0D_{Y'}g_0^{-1})$.
Let $g'\in D_{Y'}$ such that $g=g_0g'g_0^{-1}$.
Let $(x_1^{a_1},\dots,x_p^{a_p})$ be a reduced syllabic expression for $g$, let $(y_1^{b_1},\dots,y_q^{b_q})$ be a reduced syllabic expression for $g'$, and let $(z_1^{c_1},\dots,z_\ell^{c_\ell})$ be a reduced syllabic expression for $g_0$.
Since elementary M-operations preserve $S(Y)^*$ and $S(Y')^*$, by Theorem \ref{thm2_2} we have $x_i\in Y$ for all $i\in\{1,\dots,p\}$ and $y_i\in Y'$ for all $i\in\{1,\dots, q\}$.
Moreover, $g_0g'=gg_0$ and $g_0$ is of minimal syllabic length in both, $D_{Y}g_0$ and $g_0D_{Y'}$, hence, by Proposition \ref{prop2_8},
\[
p+\lg_{S(X)}(g_0)=\lg_{S(X)}(gg_0)=\lg_{S(X)}(g_0g')=\lg_{S(X)}(g_0)+ q\,,
\]
thus $p=q$.
Now we show by induction on $p$ that $x_i\in Z$ for all $i\in\{1,\dots,p\}$. 
This implies that $g\in D_Z$.

Assume $p=q=1$.
So, $g=x_1^{a_1}$ and $g'=y_1^{b_1}$.
Let $w=(z_1^{c_1},\dots,z_\ell^{c_\ell}, y_1^{b_1})$.
The word $w$ is a syllabic expression for $g_0g'$, which is of syllabic length $\ell+1$, hence $w$ is reduced.
Moreover, $\overline{x_1^{-a_1}\cdot w}=g^{-1}g_0g'=g_0$ is of syllabic length $\ell$, hence, by Lemma \ref{lem3_2}, either
\begin{itemize}
\item[(i)]
there exists $k\in\{1,\dots,\ell\}$ such that $x_1=z_1^{c_1}\cdots z_{k-1}^{c_{k-1}}z_kz_{k-1}^{-c_{k-1}}\cdots z_1^{-c_1}$, or
\item[(ii)]
$x_1=g_0y_1g_0^{-1}\in Y\cap g_0Y'g_0^{-1}= Z$.
\end{itemize}
We cannot have $x_1=z_1^{c_1}\cdots z_{k-1}^{c_{k-1}}z_k z_{k-1}^{-c_{k-1}}\cdots z_1^{-c_1}$ with $k\in\{1,\dots,\ell\}$, otherwise $(x_1^{a_1},z_1^{c_1},\dots,z_\ell^{c_\ell})$ would not be a reduced syllabic word. 
So, $x_1\in Z$.

We assume that $p\ge2$ and that the induction hypothesis holds.
Let $w=(z_1^{c_1},\dots,z_\ell^{c_\ell},y_1^{b_1},\dots,y_p^{b_p})$.
The word $w$ is a syllabic expression for $g_0g'$, which is of syllabic length $\ell+p$, hence $w$ is reduced.
On the other hand, $\overline{x_1^{-a_1}\cdot w}=x_2^{a_2}\cdots x_p^{a_p}g_0$ is of syllabic length $\ell+p-1$.
So, by lemma \ref{lem3_2}, either 
\begin{itemize}
\item[(i)]
there exists $k\in\{1,\dots,\ell\}$ such that $x_1=z_1^{c_1}\cdots z_{k-1}^{c_{k-1}}z_kz_{k-1}^{-c_{k-1}}\cdots z_1^{-c_1}$, or
\item[(ii)]
there exists $j\in\{1,\dots,p\}$ such that $x_1=g_0y_1^{b_1}\cdots y_{j-1}^{b_{j-1}}y_jy_{j-1}^{-b_{j-1}}\cdots y_1^{-b_1}g_0^{-1}$.
\end{itemize}
Suppose there exists $k\in\{1,\dots,\ell\}$ such that $x_1=z_1^{c_1}\cdots z_{k-1}^{c_{k-1}}z_k z_{k-1}^{-c_{k-1}}\cdots z_1^{-c_1}$.
Let $g_0'=z_1^{c_1}\cdots z_{k-1}^{c_{k-1}}
\allowbreak
z_{k+1}^{c_{k+1}}\cdots z_\ell^{c_\ell}$.
Then $x_1^{c_k}\in D_Y$ and $g_0=x_1^{c_k}g_0'$.
This contradicts the minimality of the syllabic length of $g_0$ in $D_Yg_0D_{Y'}$.
So, there exists $j\in\{1,\dots,p\}$ such that $x_1=g_0y_1^{b_1}\cdots y_{j-1}^{b_{j-1}}y_jy_{j-1}^{-b_{j-1}}\cdots y_1^{-b_1}g_0^{-1}$.

Since $\lg_{S(X)}(x_1^{-a_1}gg_0)=\ell+p-1$, by Lemma \ref{lem3_2} we also have $b_j=a_1$ and
\[
(z_1^{c_1},\dots,z_\ell^{c_\ell},y_1^{b_1},\dots,y_{j-1}^{b_{j-1}},y_{j+1}^{b_{j+1}},\dots,y_p^{b_p})
\]
is a reduced syllabic expression for $x_1^{-a_1}gg_0$.
Let $h=x_2^{a_2}\cdots x_p^{a_p}$ and $h'=y_1^{b_1}\cdots y_{j-1}^{b_{j-1}}y_{j+1}^{b_{j+1}}\cdots y_p^{b_p}$.
Then $h\in D_Y$ (since $x_2,\dots,x_p\in Y$), $h'\in D_{Y'}$ (since $y_1,\dots,y_{j-1},y_{j+1},\dots,y_p\in Y'$), $(x_2^{a_2},\dots,x_p^{a_p})$ is a reduced syllabic expression for $h$, $(y_1^{b_1},\dots,y_{j-1}^{b_{j-1}},y_{j+1}^{b_{j+1}},\dots,y_p^{b_p})$ is a reduced syllabic expression for $h'$, and $h=g_0h'g_0^{-1}$.
By the induction hypothesis it follows that $x_i\in Z$ for all $i\in\{2,\dots,p\}$.
Finally, since $g\in D_Y\cap g_0D_{Y'}g_0^{-1}$ and $x_i\in D_Y\cap g_0D_{Y'}g_0^{-1}$ for all $i\in\{2,\dots,p\}$, we have $x_1^{a_1}\in D_Y\cap g_0D_{Y'}g_0^{-1}$, hence, by the case $p=1$ previously proved, $x_1\in Z$.
\end{proof}

\begin{lem}\label{lem6_2}
Let $(D,X)$ be a Dyer system.
Then a finite intersection of parabolic subgroups of $D$ is a parabolic subgroup of $D$.
\end{lem}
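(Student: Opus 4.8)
The plan is to reduce to the intersection of two parabolic subgroups and then exploit Lemma \ref{lem6_1}, whose hypothesis requires a minimal-length double-coset representative. First I would argue by induction on the number of subgroups: since $\bigcap_{i=1}^n P_i=\left(\bigcap_{i=1}^{n-1}P_i\right)\cap P_n$, it suffices to treat the case of two parabolic subgroups $P_1=g_1D_{Y_1}g_1^{-1}$ and $P_2=g_2D_{Y_2}g_2^{-1}$. Conjugating out $g_1$ and setting $g=g_1^{-1}g_2$, we have
\[
P_1\cap P_2=g_1\bigl(D_{Y_1}\cap gD_{Y_2}g^{-1}\bigr)g_1^{-1},
\]
so the whole problem reduces to showing that $D_{Y_1}\cap gD_{Y_2}g^{-1}$ is a parabolic subgroup for an arbitrary $g\in D$.

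Next I would pick an element $g_0$ of minimal syllabic length in the double coset $D_{Y_1}gD_{Y_2}$; such an element exists because syllabic length takes values in $\N$ and the double coset is non-empty, so no finiteness assumption on $(D,X)$ is needed. Writing $g_0=ugv$ with $u\in D_{Y_1}$ and $v\in D_{Y_2}$, the fact that $v$ normalizes $D_{Y_2}$ gives $g_0D_{Y_2}g_0^{-1}=u\,(gD_{Y_2}g^{-1})\,u^{-1}$, and, since $u\in D_{Y_1}$ fixes $D_{Y_1}$ under conjugation, intersecting with $D_{Y_1}$ yields
\[
D_{Y_1}\cap g_0D_{Y_2}g_0^{-1}=u\bigl(D_{Y_1}\cap gD_{Y_2}g^{-1}\bigr)u^{-1}.
\]
Now Lemma \ref{lem6_1} applies to the minimal-length representative $g_0$ and gives $D_{Y_1}\cap g_0D_{Y_2}g_0^{-1}=D_Z$ with $Z=Y_1\cap(g_0Y_2g_0^{-1})$. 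Conjugating back by $u^{-1}$ then shows $D_{Y_1}\cap gD_{Y_2}g^{-1}=u^{-1}D_Zu$, which is a parabolic subgroup, and hence so is $P_1\cap P_2$.

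Most of the content is already packaged inside Lemma \ref{lem6_1}, so the remaining work is essentially bookkeeping. The one point I would treat as the crux is the passage to the minimal-length double-coset representative: Lemma \ref{lem6_1} controls the intersection \emph{only} for such a representative, and the identity $g_0D_{Y_2}g_0^{-1}=u(gD_{Y_2}g^{-1})u^{-1}$ is exactly what transports its conclusion back to an arbitrary $g$. I would double-check that $u$ and $v$ indeed normalize $D_{Y_1}$ and $D_{Y_2}$ respectively (immediate, as they lie in those standard parabolic subgroups) and that conjugation by $u$ commutes with intersection, after which the conclusion follows at once.
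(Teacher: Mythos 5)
Your proposal is correct and follows essentially the same route as the paper: reduce to $D_{Y_1}\cap gD_{Y_2}g^{-1}$, pass to a minimal-length double-coset representative $g_0$, apply Lemma \ref{lem6_1}, and conjugate back using the factor of the decomposition lying in $D_{Y_1}$. The only cosmetic difference is that you write $g_0=ugv$ where the paper writes $g=hg_0h'$; the resulting computation is the same.
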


\begin{proof}
It suffices to show that, if $Y,Y'\subset X$ and $g\in D$, then $D_Y\cap gD_{Y'}g^{-1}$ is a parabolic subgroup.
Let $g_0$ be an element in $D_YgD_{Y'}$ of minimal syllabic length.
Let $Z=Y\cap g_0Y'g_0^{-1}$.
By Lemma \ref{lem6_1} we have $D_Y\cap g_0D_{Y'}g_0^{-1}=D_Z$.
Let $h\in D_Y$ and $h'\in D_{Y'}$ such that $g=hg_0h'$.
Then
\[
D_Y\cap gD_{Y'}g^{-1}=(hD_Yh^{-1})\cap(hg_0h'D_{Y'}h'^{-1}g_0^{-1}h^{-1})=h(D_Y\cap g_0D_{Y'}g_0^{-1})
h^{-1}=hD_Zh^{-1}\,.
\proved
\]
\end{proof}

\begin{lem}\label{lem6_3}
 Let $(D,X)$ be a Dyer system of finite type.
Let $Y,Y'\subset X$ and $g,h\in D$ such that $gD_{Y'}g^{-1}\subsetneq hD_{Y}h^{-1}$.
Then $|Y'|<|Y|$.
\end{lem}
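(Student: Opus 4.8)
The plan is to prove the strict inequality $|Y'|<|Y|$ by first reducing to the case where one of the conjugators is trivial, and then extracting a contradiction from a chain of strict parabolic inclusions using a rank/cardinality argument. First I would observe that the statement is invariant under simultaneously conjugating both subgroups: replacing $(g,h)$ by $(k g, k h)$ for any $k\in D$ leaves the hypothesis $gD_{Y'}g^{-1}\subsetneq hD_Yh^{-1}$ and the conclusion unchanged. Taking $k=h^{-1}$, I may assume $h=1$, so that the hypothesis becomes $gD_{Y'}g^{-1}\subsetneq D_Y$. Thus it suffices to prove: \emph{if $g D_{Y'} g^{-1} \subsetneq D_Y$, then $|Y'|<|Y|$.}

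Next I would exploit Lemma \ref{lem6_1}. Choose $g_0$ of minimal syllabic length in the double coset $D_Y g D_{Y'}$, and write $g=h_1 g_0 h_2$ with $h_1\in D_Y$, $h_2\in D_{Y'}$. Since $gD_{Y'}g^{-1} = h_1 g_0 D_{Y'} g_0^{-1} h_1^{-1}$ and $h_1\in D_Y$, the inclusion $gD_{Y'}g^{-1}\subsetneq D_Y$ is equivalent to $g_0 D_{Y'} g_0^{-1}\subsetneq D_Y$. Applying Lemma \ref{lem6_1} with this minimal-length $g_0$ and setting $Z=Y\cap(g_0 Y' g_0^{-1})$, I obtain
\[
D_Y \cap (g_0 D_{Y'} g_0^{-1}) = D_Z.
\]
But the inclusion $g_0 D_{Y'} g_0^{-1}\subseteq D_Y$ means the left-hand side equals $g_0 D_{Y'} g_0^{-1}$ itself, so $g_0 D_{Y'} g_0^{-1} = D_Z$ where $Z\subseteq Y$. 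Conjugating by $g_0^{-1}$ gives $D_{Y'} = g_0^{-1} D_Z g_0$, a parabolic subgroup of rank $|Z|$ that is conjugate to $D_{Y'}$.

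The final step, which I expect to be the main obstacle, is to convert the above data into the cardinality bound. The strictness $g_0 D_{Y'}g_0^{-1}=D_Z\subsetneq D_Y$ forces $Z\subsetneq Y$ (by Lemma \ref{lem2_5} and Corollary \ref{corl2_6}, $D_Z=D_Y$ would imply $Z=Y$), so $|Z|<|Y|$. It therefore remains to show $|Y'|=|Z|$, i.e.\ that conjugate standard parabolic subgroups of a finite-type Dyer system have the same rank. This is the crux: I would establish that the generating rank of a standard parabolic $D_W$ equals $|W|$, so that the isomorphism $D_{Y'}\cong D_Z$ (induced by conjugation by $g_0$) yields $|Y'|=|Z|$. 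Concretely, I expect to use the abelianization or, more robustly, the $N$-module machinery of Theorem \ref{thm3_1}: the finite-type hypothesis should guarantee that $|W|$ is recoverable as an invariant of the group $D_W$ (for instance via the minimal number of generators, or via ranks in the associated module $M(D,X)$ restricted to reflections supported on $W$). Once $|Y'|=|Z|$ is in hand, combining with $|Z|<|Y|$ gives $|Y'|<|Y|$, completing the proof. The delicate point is ensuring rank is a genuine isomorphism invariant here, since Dyer groups mix torsion and torsion-free cyclic factors, so I would be careful to phrase the invariant so that it counts vertices correctly regardless of whether $f(v)$ is finite or infinite.
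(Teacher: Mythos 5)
Your reduction to $h=1$ and your use of Lemma \ref{lem6_1} are fine, and in fact cleaner than the paper at that stage: from $g_0D_{Y'}g_0^{-1}\subseteq D_Y$ and $D_Y\cap(g_0D_{Y'}g_0^{-1})=D_Z$ you correctly get $g_0D_{Y'}g_0^{-1}=D_Z$ with $Z\subseteq Y$, and strictness forces $Z\subsetneq Y$. But the step you yourself flag as the crux, $|Y'|=|Z|$, is a genuine gap, and the invariants you propose to close it do not work. The rank of a Dyer (or Coxeter) system is \emph{not} an isomorphism invariant of the underlying group: the rank-$3$ Coxeter group $W(A_1\times I_2(3))\cong C_2\times S_3$ is the dihedral group of order $12$, which is also the rank-$2$ Coxeter group $W(I_2(6))$; it is $2$-generated, and its abelianization is $C_2\times C_2$. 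So neither ``minimal number of generators equals $|W|$'' nor any abelianization count can recover $|W|$, and an unspecified appeal to the $M(D,X)$-module does not obviously fare better, since syllabic length and the reflection set are not preserved by an arbitrary conjugation.

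What actually closes the gap — and is the content of the paper's proof — is that the \emph{minimal double-coset representative} $g_0$ conjugates generators to generators, not merely $D_{Y'}$ into $D_Y$. Concretely: for $y\in Y'$ write $g_0yg_0^{-1}=f_y\in D_Y$; since $g_0$ has minimal syllabic length in both $g_0D_{Y'}$ and $D_Yg_0$, Proposition \ref{prop2_8} gives $1+\lg_{S(X)}(g_0)=\lg_{S(X)}(g_0y)=\lg_{S(X)}(f_y)+\lg_{S(X)}(g_0)$, so $f_y=x^a$ with $x\in Y$, and the exchange argument from the proof of Lemma \ref{lem6_1} then forces $a=1$ and $x=g_0yg_0^{-1}$. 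Hence $y\mapsto g_0yg_0^{-1}$ is an injection $Y'\hookrightarrow Y$ (equivalently, $g_0Y'g_0^{-1}=Z$, which is exactly your missing equality $|Y'|=|Z|$), giving $|Y'|\le|Y|$ with equality only if $g_0D_{Y'}g_0^{-1}=D_Y$. Your plan is salvageable, but only by inserting this elementwise conjugation argument; as written, the final step rests on a false premise.
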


\begin{proof}
Without loss of generality we can assume that $h=1$, else we conjugate both sides with $h^{-1}$ and we  replace $g$ by $h^{-1}g$.
Let $g_0$ be an element in $D_YgD_{Y'}$ of minimal syllabic length.
Let $k\in D_Y$ and $k'\in D_{Y'}$ such that $g=kg_0k'$.
Then
\[
gD_{Y'}g^{-1}\subsetneq D_{Y}\ \Leftrightarrow\ kg_0k'D_{Y'}k'^{-1}g_0^{-1}k^{-1}\subsetneq kD_{Y}k^{-1}\
\Leftrightarrow\ g_0D_{Y'}g_0^{-1}\subsetneq D_Y\,.
\]
So, we can assume that $g_0D_{Y'}g_0^{-1}\subsetneq D_Y$.
Let $y\in Y'$.
There exists $f_y\in D_Y$ such that $g_0yg_0^{-1}=f_y$, that is, $g_0y=f_yg_0$.
Since $g_0$ has minimal syllabic length in both, $g_0D_{Y'}$ and $D_Yg_0$, by Proposition \ref{prop2_8} we have
\[
1+\lg_{S(X)}(g_0)=\lg_{S(X)}(g_0y)=\lg_{S(X)}(f_yg_0)=\lg_{S(X)}(f_y)+ \lg_{S(X)}(g_0)\,,
\]
hence $\lg_{S(X)}(f_y)=1$, thus there exist $x\in Y$ and $a\in\Z_{o(x)}\setminus\{0\}$ such that $f_y=x^a$.
Using the same arguments as in the proof of Lemma \ref{lem6_1}, from the equalities $x^ag_0=g_0y$ and $\lg_{S(X)}(x^ag_0)=\lg_{S(X)}(g_0y)=\lg_{S(X)}(g_0)+1$ it follows that $a=1$ and $x=g_0yg_0^{-1}$.
So, $g_0Y'g_0^{-1}\subset Y$.
Since conjugation by $g_0$ is a bijection, this inclusion implies that $|Y'|\le|Y|$.
Moreover, if $|Y'|=|Y|$, then $g_0Y'g_0^{-1}=Y$, hence $g_0D_{Y'}g_0^{-1}=D_Y$, which is a contradiction.
So, $|Y'|<|Y|$.
\end{proof}

\begin{proof}[Proof of Theorem \ref{thm2_10}]
Let $(D,X)$ be a Dyer system of finite type.
Let $\{P_i\mid i\in I\}$ be a non-empty collection of parabolic subgroups of $(D,X)$.
Let $\PP$ be the set of all finite intersections of elements of $\{P_i\mid i\in I\}$.
We know by Lemma \ref{lem6_2} that the elements of $\PP$ are all parabolic subgroups.
We choose $P_0=g_0D_{Y_0}g_0^{-1}$ in $\PP$ with minimal $|Y_0|$ and we show that $\bigcap_{i\in I}P_i=P_0$.
Clearly, it suffices to show that $P_0\subset P$ for all $P\in\PP$.
Let $P=gD_Yg^{-1}\in\PP$ and $P'=P_0\cap P$.
We have $P'\in\PP$ by definition and by Lemma \ref{lem6_2} there exist $h\in D$ and $Z\subset X$ such that $P'=hD_Zh^{-1}$.
We have $|Z|\ge|Y_0|$ by minimality of $|Y_0|$ and $P'=hD_Zh^{-1}\subset P_0=g_0D_{Y_0}g_0^{-1}$, hence, by Lemma \ref{lem6_3}, $P'=P_0$.
So, $P_0=P'=P_0\cap P\subset P$.
\end{proof}



\end{document}